\pgfplotsset{compat=1.10}
\newcommand\numberthis{\addtocounter{equation}{1}\tag{\theequation}}
\renewcommand*\env@matrix[1][*\c@MaxMatrixCols c]{%
  \hskip -\arraycolsep
  \let\@ifnextchar\new@ifnextchar
  \array{#1}}
\newcommand{\R}{\mathbb R}
\newcommand{\refone}[1]{\textcolor{black}{#1}}
\newcommand{\reftwo}[1]{\textcolor{black}{#1}}
\newtheorem{example}{Example}[section]
\definecolor{rev2}{HTML}{cb270f}
\definecolor{rev1}{HTML}{0000ff}
\title{Structured matrix recovery from matrix-vector products\thanks{Funding: This work is supported by National Science Foundation grant No. DMS-1952757, No. DMS-2045646,  and No. DGE-2139899.}}
\author{Diana Halikias\thanks{Mathematics Department, Cornell University, Ithaca, NY 14853-4201, United States (\email{dh736@cornell.edu}).}
\and Alex Townsend\thanks{Mathematics Department, Cornell University, Ithaca, NY 14853-4201, United States (\email{townsend@cornell.edu}).}}
\begin{document}
\maketitle
\begin{abstract}
Can one recover a matrix efficiently from only matrix-vector products? If so, how many are needed? This paper describes algorithms to recover \reftwo{matrices with known structures}, such as tridiagonal, Toeplitz, Toeplitz-like, and hierarchical low-rank, from matrix-vector products. In particular, we derive a randomized algorithm for recovering an $N \times N$ unknown hierarchical low-rank matrix from only $\mathcal{O}((k+p)\log(N))$ matrix-vector products with high probability, where $k$ is the rank of the off-diagonal blocks, and $p$ is a small oversampling parameter. We do this by carefully constructing randomized input vectors for our matrix-vector products that exploit the hierarchical structure of the matrix. While existing algorithms for hierarchical matrix recovery use a recursive ``peeling" procedure based on elimination, our approach uses a recursive projection procedure.
\end{abstract}

\begin{keywords}
Hierarchical low-rank matrices,  randomized SVD, matrix-vector products, rank-structured matrices
\end{keywords}

\begin{AMS}
15A23, 65F55, 68W20
\end{AMS}

\section{Introduction}\label{sec:Introduction}
Suppose that there is an unknown structured matrix $A\in\mathbb{R}^{N\times N}$ that one can only access via the matrix-vector product operations $x\mapsto Ax$ and $x\mapsto A^\top x$, where $A^\top$ is the transpose of $A$. \reftwo{ We denote the number of matrix-vector product queries made to $A$ and $A^\top$ by $m$ and $n$, respectively. How can one recover $A$ while minimizing the total number of  matrix-vector product queries $m + n$?}

Any matrix can be recovered in at most $N$ queries, as it can be recovered column-by-column with $Ae_{j}$ for $1\leq j\leq N$, where $e_j$ is the $j$th canonical unit vector. However, if $A$ is known to have a structure such as tridiagonal, symmetric, orthogonal, Toeplitz-like, or hierarchical low-rank, one tentatively hopes to recover $A$ in fewer queries. 

\reftwo{To phrase our question more formally, we introduce the definition of query complexity, borrowing  terminology from a survey of a more general problem~\cite{sun2021querying}. 
\begin{definition}
Suppose $\mathcal A \subset \R^{N \times N}$ is a family of structured matrices. The query complexity $\text{QC}(\mathcal A)$ is equal to $m + n$ if $m + n$ is the smallest number of queries to $A$ and $A^\top$ needed to uniquely recover any matrix $A \in \mathcal A$. 
\end{definition}
Query complexity measures the information or complexity of a family of structured matrices from the matrix-vector product perspective. We address the following questions: given a particular structured family of matrices $\mathcal A$, what is $\text{QC}(\mathcal A)$? If we know in advance the structure of $\mathcal A$, can we devise a practical algorithm that recovers any matrix $A\in\mathcal{A}$ in $\text{QC}(\mathcal A)$ queries? The column-by-column approach yields the upper bound $\text{QC} (\mathcal A) \leq N$, for any family $\mathcal A$ of $N \times N$ matrices.}

There are two main types of vector inputs in matrix recovery problems: (1) Predetermined input vectors, where one tries to recover $A$ from given matrix-vector product pairs $y_1=Au_1,\ldots,y_m =Au_m$ and $z_1 = A^\top v_1,\ldots,z_n=A^\top v_n$,\footnote{This is equivalent to trying to simultaneously solve matrix equations of the form $Y = AU$ and $Z = A^\top V$ for a structured matrix $A$.} and (2) Algorithmically-determined input vectors, where an algorithm can select the $u_j$'s and $v_j$'s used in the queries $Au_1,\ldots,Au_m,A^\top v_1,\ldots,A^\top v_n$. In this paper, we consider algorithmically-determined input vectors. We even allow $u_k$ and $v_k$ to be selected adaptively and without constraints, meaning that $u_k$ and $v_k$ may  depend on $Au_1,\ldots, Au_{k-1}$ and $Av_1,\ldots, Av_{k-1}$. \reftwo{We also consider using both deterministic and randomly generated input vectors.} 

\reftwo{There are also two types of recovery problems: exact recovery, where we recover the matrix without any error, and approximate recovery, where we recover a matrix within some desired tolerance. Query complexity relates to the exact recovery problem, and this is the main focus of our paper. However, we also consider approximate recovery in the contexts of low-rank and hierarchical matrix recovery due to its practical significance.}

There are several existing approaches for matrix recovery problems from matrix-vector products. \reftwo{If we know in advance that a matrix is well-approximated by a rank-$k$ matrix,} the randomized singular value decomposition (SVD)~\cite{halko2011finding,martinsson2020randomized} selects random Gaussian \reftwo{input} vectors and can stably recover it with high probability using $m = k+p$ and $n = k+p$, where $p$ is a small fixed constant, i.e., $p = 5$. \refone{One can also use the generalized Nystr\"{o}m algorithm to stably recover such a matrix with $m = k + p$ and $n = 2m + p$~\cite{nakatsukasa2020fast,tropp2017randomized}.} Peeling algorithms use structured random vectors and a recursive elimination procedure to recover hierarchical low-rank matrices~\cite{lin2011fast,martinsson2016compressing, martinsson2022coloring}. Moreover, sparse matrices with columns that have a disjoint sparsity pattern can be recovered with one  query~\cite[Fig.~1]{schafer2021sparse}, which leads to an algorithm to recover positive semidefinite hierarchical matrices. Another  family of hierarchical matrices, hierarchical semiseparable (HSS) matrices can  be recovered with a   linear-complexity algorithm that exploits a telescoping factorization~\cite{martinsson2022HBS}.  

If one does not know if an unknown matrix $A$ is structured or not, then there are algorithms for testing if a matrix has a particular property using matrix-vector products. Property detection is often easier than matrix recovery, requiring far fewer queries. In particular, only $\mathcal O (1)$ queries are required to determine with high probability if a matrix is diagonal or symmetric, and precisely 1 query is necessary to determine if a matrix is orthogonal~\cite{sun2021querying}. 

Our motivation for matrix recovery from matrix-vector products arises from partial differential equation (PDE) learning~\cite{boulle2021learning}. In that setting, one selects forcing terms $f_1,\ldots,f_N$ of a PDE and then observes the corresponding solutions $u_1,\ldots,u_N$. The goal  is to learn the solution operator that maps forcing terms to responses, given training data $\{(f_j,u_j)\}_{j=1}^N$~\cite{boulle2022data,gin2020deepgreen,kovachki2021neural,li2020fourier,li2020multipole,lu2021learning,wang2021learning}. For the case of an elliptic or parabolic linear PDE, the solution operator can be represented as an integral operator, and we seek its Green's function kernel~\cite{boulle2021learning,boulle2022learning}. The discrete version of Green's function recovery is hierarchical low-rank matrix recovery from matrix-vector products. For variable coefficient elliptic PDEs, the discretized Green's function is a so-called hierarchical off-diagonal low-rank (HODLR) matrix (see~\cref{fig:StructureDiagram}), where the off-diagonal blocks have rapidly decaying singular values~\cite{bebendorf2003existence}. For constant coefficient elliptic PDEs, the discrete analogue is the recovery of a more specific type of HODLR matrix, the so-called hierarchical semiseparable  matrices, sometimes also called hierarchical block separable (HBS) matrices (see~\cref{sec:HSSrecovery}). In this paper, we use the HSS notation.

There are other emerging applications of matrix recovery from matrix-vector products, including the computation of matrix functions, i.e., $f(A)$, where $f(A)$ is structured, from the matrix-vector products $x\mapsto f(A)x$~\cite{nakatsukasapersonal}. 

In~\cref{sec:HSSrecovery,sec:HODLRrecovery}, we describe randomized algorithms that recover $N\times N$ rank-$k$ HODLR matrices from $\mathcal{O}((k+p)\log(N))$ queries and HSS matrices from a small multiple of $k$ queries with high probability. There are existing algorithms for HODLR matrix recovery using $\mathcal{O}(k\log(N))$ queries based on a recursive  elimination strategy~\cite{lin2011fast,martinsson2016compressing}. Instead of using recursive elimination,  our recovery algorithms use a recursive projection procedure that carefully projects the input query vectors, as well as outputs.  Therefore, we think of our recovery algorithm for HODLR matrices as a QR-variant of recursive elimination~\cite{lin2011fast,martinsson2016compressing}. We suspect our algorithm to be more theoretically stable than peeling due to the advantages of projection over elimination, though both algorithms are observed to be stable in practice. 

The paper is organized as follows. In~\cref{sec:generalrecovery}, we discuss algorithms for recovering matrices with some basic structures using matrix-vector products. In~\cref{sec:HSSrecovery}, we describe a randomized algorithm for HSS matrix recovery, and in~\cref{sec:HODLRrecovery}, we derive a stable algorithm for HODLR recovery by incrementally making the matrix structure more complicated. Finally, in~\cref{sec:related}, we consider related problems, such as  recovering matrices when the matrix-vector products are error-prone and recovering hierarchical matrices whose blocks are only numerically low-rank.

\section{Matrix recovery from matrix-vector products for basic matrix structures}{\label{sec:generalrecovery}}
It is always possible to recover any $N \times N$ matrix $A$ in $N$ queries by selecting the input vectors as canonical basis vectors and recovering $A$ column-by-column. However, if $A$ is a structured matrix, we would hope to exploit that structure and recover $A$ using far fewer queries. Each matrix-vector product query yields $N$ equations linear in the parameters defining the entries of $A$ as 
\[
\begin{bmatrix}A_{11} & \cdots & A_{1N} \\ \vdots & \ddots & \vdots \\ A_{N1} & \cdots & A_{NN} \end{bmatrix}\!\!\begin{bmatrix}x_1\\\vdots \\x_N \end{bmatrix} = \begin{bmatrix}b_1\\\vdots\\b_N \end{bmatrix} \qquad \Longrightarrow \qquad \begin{matrix} A_{11}\reftwo{x_1} + \cdots+ A_{1N}x_N = b_1 \\ \vdots \\ A_{N1}x_1 + \cdots + A_{NN}x_N = b_N\end{matrix}.  
\]
\reftwo{This suggests that one may perform enough matrix-vector products to construct a linear system with more equations than unknowns and solve for them. Of course, if there are $N^2$ unknowns, this requires $N$ matrix-vector products. At this point, one could have more efficiently recovered $A$ column-by-column. However, if the entries of $A$ are functions that are linear in fewer than $N^2$ parameters, solving a linear system can be a reasonable strategy. This observation motivates the following definition.}
\reftwo{
\begin{definition}
A linearly parametrized family of $N \times N$ matrices $\mathcal A \subset \R^{N \times N}$ is given by the map $\mathcal A: \R^p \to \R^{N \times N}$, which takes 
$$
\theta \mapsto \sum_{i = 1}^p \theta_i A_i,
$$
where $\theta \in \R^p$ is a vector of $p$ parameters and $\{A_i\}_{i = 1}^p$ is a set of linearly independent basis matrices. 
\end{definition}}
\reftwo{We note that according to this definition, any matrix $A \in \mathcal A$ is uniquely defined by its $p$ parameters. Examples of linearly parametrized families include tridiagonal matrices, symmetric matrices, circulant matrices, and Toeplitz matrices. The family of rank-$k$ matrices is not linearly parametrized. The following lower bound on $\text{QC}(\mathcal A)$ holds when $\mathcal{A}$ is linearly parameterized.}

\reftwo{\begin{lemma}\label{linparam}
If $\mathcal A$ is a linearly parametrized family of matrices in $p$ parameters,  
\begin{equation} 
\text{QC}(\mathcal A) \geq \Big\lceil\frac{p}{N}\Big\rceil, 
\label{eq:lowerbound} 
\end{equation} 
where $\lceil\cdot \rceil$ is the ceiling function.
\end{lemma}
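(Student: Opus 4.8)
The plan is to reduce exact recovery to solving a linear system in the $p$ parameters and then count equations. If $A = A_\theta := \sum_{i=1}^p \theta_i A_i$, then for any input vector $u$ we have $A_\theta u = \sum_{i=1}^p \theta_i (A_i u)$, so $\theta \mapsto A_\theta u$ is a \emph{linear} map $\R^p \to \R^N$; likewise $\theta \mapsto A_\theta^\top v = \sum_{i=1}^p \theta_i (A_i^\top v)$ is linear. Hence any collection of $q := m+n$ matrix-vector products with $A$ and $A^\top$ yields a linear system with at most $qN$ scalar equations in the unknown $\theta \in \R^p$, and linear independence of $\{A_i\}$ means distinct $\theta$ correspond to distinct matrices in $\mathcal A$. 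If $qN < p$ this system cannot have a unique solution, which should force $\text{QC}(\mathcal A) \geq \lceil p/N \rceil$.

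To turn this into a proof that also rules out \emph{adaptive} strategies, I would argue by contradiction. Suppose some algorithm recovers every $A \in \mathcal A$ using $q = m+n$ queries with $qN < p$. First run it on the zero matrix $A_0$ (that is, $\theta = 0$): every returned product is the zero vector, so the queried vectors $u_1,\dots,u_m,v_1,\dots,v_n$ it selects are completely determined (adaptivity is vacuous when all responses are zero). Now consider the homogeneous system asking that $A_\theta u_k = 0$ for $k = 1,\dots,m$ and $A_\theta^\top v_k = 0$ for $k = 1,\dots,n$; this is at most $qN < p$ equations in $\theta$, so it has a nonzero solution $\theta^\star$, and by linear independence of the $A_i$ we get $A_{\theta^\star} \neq 0$. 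Running the algorithm on $A_{\theta^\star}$: by induction on the query index, the $k$-th query depends only on the first $k-1$ responses, which are zero in both the $A_0$-run and the $A_{\theta^\star}$-run, so the two runs issue identical queries and receive identical (zero) responses. The algorithm therefore returns the same matrix in both cases, contradicting that it recovers both $A_0 = 0$ and $A_{\theta^\star} \neq 0$. Hence $qN \geq p$, i.e., $q \geq \lceil p/N \rceil$.

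I expect the only real subtlety to be the adaptivity bookkeeping: an adaptive algorithm may interleave queries to $A$ and $A^\top$ and choose each input based on all earlier outputs, and one must check this flexibility does not beat the counting bound. The zero-matrix reduction above is precisely what handles it — on $A_0$ and on $A_{\theta^\star}$ all observable data coincide, so no strategy can separate the two matrices. (If one only wants the bound for non-adaptive, predetermined query sets, the argument collapses to a single rank count: the stacked $qN \times p$ coefficient matrix of the $u_k$- and $v_k$-equations has rank at most $qN < p$, so its kernel contains a nonzero $\theta^\star$ with $A_{\theta^\star}$ indistinguishable from $0$.)
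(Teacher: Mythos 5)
Your proof is correct, and its core is the same counting argument the paper uses: each query to $A$ or $A^\top$ contributes $N$ equations that are linear in $\theta$, so unique recovery of the $p$ parameters forces $N\cdot(\text{\# queries}) \geq p$. The paper's proof is exactly this two-line dimension count and stops there. Where you go further is the adaptivity bookkeeping: the paper implicitly treats the queries as a fixed set, whereas you run the (possibly adaptive) algorithm on the zero matrix, observe that all responses are zero so the query sequence is pinned down, extract a nonzero kernel element $\theta^\star$ of the resulting $qN\times p$ homogeneous system, and conclude that $A_0$ and $A_{\theta^\star}$ are indistinguishable to the algorithm. That extra step is a genuine strengthening --- it rules out adaptive strategies explicitly, which the paper's one-liner does not address --- and it also replaces the paper's slightly loose phrasing (``if this linear system is solvable, there must be more equations than unknowns'') with the correct statement that a consistent underdetermined system cannot have a unique solution. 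One small point worth noting: your argument as written establishes the lower bound for deterministic algorithms; for randomized algorithms one would fix the random seed and apply the same zero-matrix reduction to each realization, which is immediate but worth a sentence.
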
 
\begin{proof}
Recovering any $A \in \mathcal A$ is equivalent to recovering the $p$ parameters which define $A$. Each matrix-vector product query yields $N$ linear equations in these $p$ parameters. If this linear system is solvable, there must be more equations than unknowns. Thus, $N \times (\text{\# queries}) \geq p$.
\end{proof}}

 For some \reftwo{linearly parametrized families}, one can derive an algorithm that achieves the lower bound in~\cref{eq:lowerbound}, while for others, we prove that it is not feasible. Note that~\cref{eq:lowerbound} is not a valid lower bound \reftwo{for matrix structures that are not linearly parametrized.} Thus, ~\cref{eq:lowerbound} cannot be applied to the recovery of rank-$k$ or HODLR matrices. 
 
\reftwo{A recent result by Otto~\cite{otto} shows that if the matrix recovery problem for a given linearly parametrized family $\mathcal A$ is unique using a particular set of $s$ matrix-vector products, then the recovery problem for $\mathcal A$ is unique for almost all other sets of $s$ input vectors with respect to the Lebesgue measure. In particular, this means that if one finds a deterministic recovery algorithm for $\mathcal A$ using $s$ input vectors, then the linear system generated by $s$ random Gaussian matrix-vector products has a unique solution with probability 1. We employ this useful result several times, as it allows us to relate deterministic and randomized recovery algorithms.}
\subsection{Recovering some common structured matrices}
We begin by considering how to \reftwo{exactly recover} several common structured matrices. \reftwo{Some form linearly parametrized families and others do not. We also consider both deterministic and randomized inputs, where the probability of success is 1. }
 \begin{table}
 \centering
 \begin{tabular}{ccccc}
 Structure  & $\# x\mapsto Ax$ & $\# x\mapsto A^\top x$ \\
 \hline
 diagonal &  1 & - \\
 block-$k$ diagonal & $k$ & 0\\
 tridiagonal  & 3 & 0 \\
 symm.~tridiag.  &  2 & - \\
 rank-$k$  & $k+p$ & $k$ \\ 
 symm. rank-$k$  & $k+p$ & - \\
 circulant & 1 & 0 \\
Toeplitz or Hankel  & 2 & 0 \\
symmetric& $N$ & - \\
orthogonal  &  $N$ & 0 \\ 
Toeplitz-like & $2p+4$ & $2p+4$ \\ 
HSS rank-$k$  & $6k + 2p$ & $2k$ \\
symm.~HSS rank-$k$  & $5k + p$  &  - \\
HODLR rank-$k$   & $(6k + 4p) \lceil \log_2(N) \rceil$ & $4k \log_2(N)$  \\
symm.~HODLR rank-$k$ & $(6k + 2p) \lceil \log_2(N) \rceil$  & - \\
\hline
 \end{tabular} 
 \caption{A summary of \reftwo{deterministic and randomized matrix recovery algorithms for structured matrices from matrix-vector products. Each of the randomized algorithms performs exact recovery with a success probability of 1.} Here, HSS stands for hierarchical semiseparable, HODLR stands for hierarchical off-diagonal low-rank, and $p$ is an oversampling parameter, i.e., $p = 5$. An oversampling parameter is needed as the recovery algorithms use randomized linear algebra. For HSS and HODLR matrices, $\ell$ is such that the diagonal blocks are size $2^\ell \times 2^\ell$ (see~\cref{sec:HSSrecovery,sec:HODLRrecovery}). }
 \end{table} 
 
\subsubsection{Diagonal matrices} If $A$ is known to be a diagonal matrix, then its diagonal entries satisfy ${\rm diag}(A) = A\mathbf{1}_N$, where $\mathbf{1}_N$ is the all ones vector of size $N$. This means a diagonal matrix can be recovered with one matrix-vector product\reftwo{, so $\text{QC}(\text{diagonal matrices}) = 1$.}

\subsubsection{Block diagonal matrices}  If $A$ is known to be a block diagonal matrix with $k\times k$ blocks, then its diagonal blocks can be recovered from $k$ matrix-vector products of the form $A( \mathbf{1}_{N/k} \otimes e_j)$ for $1\leq j\leq k$, where $A( \mathbf{1}_{N/k} \otimes e_j )$ returns the $j$th column of each block stacked into a vector. Here, $e_j$ is the $j$th unit canonical vector of size $k$ and `$\otimes$' denotes the Kronecker product. \reftwo{Then, $\text{QC}(\text{$k$-block diagonal matrices}) \leq k $. By \cref{linparam}, this is an equality.}

\subsubsection{Tridiagonal matrices} If $A$ is known to be a tridiagonal matrix, then it can be recovered with three matrix-vector products, but not fewer by~\cref{eq:lowerbound}.  \reftwo{Therefore, $\text{QC} (\text{tridiagonal matrices}) = 3$.} Since $A$ is tridiagonal, we have 
\begin{equation} 
\begin{bmatrix}A_{11}\\A_{21} + A_{23} \\ A_{33} \\ A_{43} + A_{45} \\\vdots \end{bmatrix} = A\!\begin{bmatrix}1\\0\\1\\0\\\vdots \end{bmatrix}, \quad \begin{bmatrix}A_{12}\\ A_{22} \\ A_{32}+A_{34} \\A_{44}\\\vdots \end{bmatrix} = A\!\begin{bmatrix}0\\1\\0\\1\\\vdots \end{bmatrix}, \quad \begin{bmatrix}A_{11}+A_{21}\\ A_{12} + A_{22} + A_{32} \\ A_{23} + A_{33} + A_{43} \\ A_{34}+A_{44}+A_{54}\\\vdots \end{bmatrix} = A^\top \!\begin{bmatrix}1\\1\\1\\1\\\vdots \end{bmatrix}, 
\label{eq:tri_matvecs}
\end{equation} 
where $A_{jk}$ denotes the $(j,k)$ entry of $A$. Thus, the entries of the tridiagonal matrix can be found recursively from~\cref{eq:tri_matvecs} using $\mathcal{O}(N)$ operations; the first two matrix-vector products immediately give the diagonal entries $A_{11},A_{22},\ldots$ and $A_{12}$; the third query then gives $A_{21}$ and $A_{32}$; from this, the first query gives $A_{23}$; and so on. Alternatively, one can multiply $A$ by the three inputs $[1, 0, 0, 1, \cdots]^\top$, $[0, 1, 0, 0, 1, \cdots]^\top$, and $[0, 0, 1, 0, 0, 1, \cdots]^\top$, extracting each nonzero entry of the tridiagonal matrix \cite{nakatsukasapersonal}.  \reftwo{Both algorithms yield the upper bound of 3 on $\text{QC}(\text{tridiagonal matrices})$, and a parameter count and  \cref{linparam} imply this is an equality.}

If $A$ is a symmetric tridiagonal matrix, then only two matrix-vector products are required, as the third query in~\cref{eq:tri_matvecs} is unnecessary because the first two input vectors sum to the third. \reftwo{ Then, $\text{QC}(\text{symmetric tridiagonal matrices}) = 2$. }

\subsubsection{Rank-$\mathbf{k}$ matrices} \reftwo{Intuitively, one needs to query a rank $k$ matrix (and its transpose) $k$ times to recover the $k$ dimensional row and column spaces. Indeed, we show that $\text{QC}(\text{rank-$k$ matrices}) = 2k$ in the sense of exact recovery. }

 In the numerical setting, if $A$ is a rank-$k$ matrix, the randomized SVD  recovers $A$ with \reftwo{probability 1} from $2k+p$ matrix-vector products~\cite{martinsson2020randomized}, where $p$ is a small oversampling factor, i.e., $p = 5$.  Let $X\in\mathbb{R}^{N\times (k+p)}$ be a random matrix with i.i.d.~standard Gaussian entries. Then, with probability $1$, we have $A = QQ^\top A$, where $Q\in\mathbb{R}^{N\times k}$ is a matrix with orthonormal columns that form a basis for the column space of $AX$. To construct $QQ^\top A$, we only need to do matrix-vector products. We first compute $AX$, which takes $k+p$ matrix-vector products, then compute $Q^\top A = (A^\top Q)^\top$, which costs $k$ further queries. The matrix $Q$ can be computed from $AX$ by a column-pivoted QR factorization, and since $AX$ is a rank-$k$ matrix, one can take an economized version for which $Q$ has only $k$ columns, not $k+p$. \refone{An algorithmic description of the randomized SVD can be found on page 9 of~\cite{halko2011finding}.}

It is important to have a randomized algorithm for low-rank matrix recovery to avoid the input vectors being in the $N-k$ dimensional nullspace of $A$.  The oversampling parameter is also critical for a stable recovery algorithm as there is always a nontrivial chance that a random Gaussian vector has a large component in the nullspace of $A$. The randomized SVD requires $2k+p$ matrix-vector product queries to recover a rank-$k$ matrix. \reftwo{It also recovers a near-optimal approximation of a numerically rank-$k$ matrix with probability at least $1 - 6p^{-p}$~\cite{halko2011finding}.} For a symmetric rank-$k$ matrix, the Nystr\"{o}m method may be preferred, as it only requires $k + p$ matrix-vector product queries because it can exploit the symmetry of $A$~\cite{nakatsukasa2020fast}.

For a nonsymmetric rank-$k$ matrix, the randomized SVD \reftwo{achieves near-optimal query complexity  due to  the following lemma, which shows  $\rm{QC}(\text{rank-$k$ matrices}) = 2k$.} While this result is intuitive, the proof is more complex than we expect, particularly because we must consider several degenerate cases of  input-output pairs. Specifically, we deal with the cases where inputs lie in the nullspace of the matrix, and the matrix-vector products therefore yield zero vectors. When inputs are chosen randomly in randomized recovery algorithms such as the randomized SVD and the Nystr\"{o}m method, these cases do not occur with probability 1. \reftwo{The following result not only provides a lower bound on the query complexity of rank-$k$ matrices, but also constructs infinite families of rank-$k$ matrices that satisfy the matrix-vector products when the number of queries is too low.}

\reftwo{As a final note, in the following result we make the assumption that the input matrices $X$ and $W$ have orthonormal columns. We can do this for the following reasons. First, without loss of generality, we assume the columns of $X$ and $W$ are linearly independent; if not, then  some matrix-vector products  only provide redundant information. We  also assume their columns are orthonormal. Let $X  = Q_1 R_1$ and $W = Q_2R_2$ be the QR factorizations of $X$ and $W$. Then, we reduce the problem to the equivalent recovery problem given by the matrix-vector products $AQ_1 = YR_1^{-1}$ and $A^\top Q_2 = Z R_2^{-1}$. In fact, there exists an orthonormal basis $[\tilde X \ \hat X]$ for $\rm col(X)$, where $\tilde X \in\mathbb{R}^{N\times p}$, ${\rm col}(\tilde{X})\subseteq {\rm col}(A^\top)$, and ${\rm col}(\hat X) \subseteq {\rm null}(A)$. Similarly,  $ [\tilde W \ \hat W]$ is an orthonormal basis for the $\rm col(W)$ such that $\tilde W \in\mathbb{R}^{N\times q}$,   ${\rm col}(\tilde{W})\subseteq {\rm col}(A)$, and ${\rm col}(\hat W) \subseteq {\rm null}(A^\top)$. We  define $\tilde Y = A\tilde X$, $\hat Y = A \hat{X}$, $\tilde Z = A^\top \tilde W$, and $\hat Z = A^\top \hat W$ and solve the equivalent recovery problem with inputs $[\tilde X \ \hat X]$ and $[\tilde W \  \hat W]$.}

\begin{lemma}
Let $N$ and $k$ be integers such that $1\leq k< N$. \reftwo{Let $X \in \R^{N \times k_1}$ and $W \in \R^{N \times k_2}$ have orthonormal columns.} An \refone{unknown} $N \times N$ matrix $A$ of rank $\leq k$ is never uniquely determined by $k_1$ \reftwo{matrix-vector products  $AX$ and $k_2$ matrix-vector products  $A^\top W$} if $\min(k_1,k_2)<k$ and $\max(k_1,k_2)<N$.
\label{lem:lowrank}
\end{lemma}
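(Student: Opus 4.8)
The plan is to show that whenever $\min(k_1,k_2) < k$ and $\max(k_1,k_2) < N$, there are at least two distinct matrices of rank $\leq k$ that are consistent with the given data $AX = Y$ and $A^\top W = Z$; in fact I expect to exhibit an infinite family, as the paper's discussion advertises. Without loss of generality suppose $k_1 = \min(k_1,k_2) < k$, so the column-query side is the ``deficient'' one. Fix any particular solution $A_0$ of rank $\leq k$ consistent with the data (one exists by hypothesis, since we are told the data comes from an actual rank-$\leq k$ matrix). I would then look for perturbations $A = A_0 + E$ that (i) preserve both sets of matrix-vector products, i.e.\ $EX = 0$ and $E^\top W = 0$, and (ii) keep $\mathrm{rank}(A) \leq k$. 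Condition (i) says the columns of $E$ lie in the $(N-k_1)$-dimensional space $\mathrm{col}(X)^\perp$ and the rows of $E$ lie in $\mathrm{col}(W)^\perp$; condition (ii) is the real constraint to engineer.

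The cleanest construction: take $E$ of rank $1$, $E = uv^\top$, with $u \perp \mathrm{col}(W)$ and $v \perp \mathrm{col}(X)$. Such nonzero $u,v$ exist precisely because $k_2 < N$ (so $\mathrm{col}(W)^\perp \neq \{0\}$) and $k_1 < N$. Now I must arrange $\mathrm{rank}(A_0 + uv^\top) \leq k$. Here is where $k_1 < k$ enters: since $\mathrm{rank}(A_0) \leq k$, write an SVD-type factorization $A_0 = PQ^\top$ with $P, Q \in \mathbb{R}^{N\times k}$. I want to choose $v \in \mathrm{col}(X)^\perp$ lying in $\mathrm{col}(Q)$ — equivalently in $\mathrm{col}(Q) \cap \mathrm{col}(X)^\perp$ — which is nonempty because $\dim \mathrm{col}(Q) = k > k_1 = \mathrm{codim}\,\mathrm{col}(X)^\perp$ forces the intersection to have dimension $\geq k - k_1 \geq 1$. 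Wait — more carefully, $\dim(\mathrm{col}(Q)) + \dim(\mathrm{col}(X)^\perp) = k + (N - k_1) > N$, so the intersection is nontrivial. For such $v$, the row space of $uv^\top$ is contained in the row space of $A_0$, hence $\mathrm{rank}(A_0 + uv^\top) \leq \mathrm{rank}(A_0) \leq k$, giving (ii) for free. Meanwhile $u$ ranges over the nonzero vectors of $\mathrm{col}(W)^\perp$ and $v$ over a subspace of dimension $\geq k - k_1 \geq 1$, so $\{A_0 + uv^\top\}$ is an infinite family of rank-$\leq k$ matrices, all consistent with the data, and distinct from $A_0$ since $uv^\top \neq 0$.

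I should double-check the degenerate cases the authors flag. If $Y$ (or $Z$) is wholly or partly zero, or if some queried input lies in $\mathrm{null}(A_0)$, the argument above still goes through verbatim: it only used the factorization $A_0 = PQ^\top$ and the dimension counts, not any genericity of the data. The reduction in the paragraph preceding the lemma lets me take $X, W$ with orthonormal columns, and even splits them as $[\tilde X\ \hat X]$, $[\tilde W\ \hat W]$; but the construction does not need that refinement — it is robust to $\mathrm{col}(X)$ meeting $\mathrm{null}(A_0)$. The main obstacle, and the only place requiring care, is making sure the rank of the perturbed matrix does not exceed $k$: the trick of forcing $v \in \mathrm{col}(Q)$ (the row space of $A_0$) so that the perturbation does not enlarge the row space is what makes this clean, and the hypothesis $k_1 < k$ is exactly what guarantees room to do it. The symmetric case $k_2 < k$ is handled identically by working with $E = uv^\top$ where $u$ instead lies in $\mathrm{col}(P)$ and $v \in \mathrm{col}(X)^\perp$, or simply by transposing the whole setup.
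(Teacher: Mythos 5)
Your proof is correct, but it takes a genuinely different and substantially shorter route than the paper's. The paper fixes orthonormal decompositions $[\tilde X \ \hat X]$ and $[\tilde W \ \hat W]$ of ${\rm col}(X)$ and ${\rm col}(W)$ relative to ${\rm null}(A)$ and ${\rm null}(A^\top)$, then runs a four-case analysis on $p = \dim{\rm col}(\tilde X)$ and $q = \dim{\rm col}(\tilde W)$, with the main case exhibiting an explicit family of consistent rank-$\leq k$ matrices of the form $[\tilde Y \ \tilde W]\, M(C)\, [\tilde X \ \tilde Z]^\top$ parametrized by an arbitrary $p\times q$ matrix $C$. You instead perturb the true matrix by a single rank-one term $uv^\top$ with $u \in {\rm col}(W)^\perp$, $v \in {\rm col}(X)^\perp$, and $v$ additionally confined to a $k$-dimensional subspace ${\rm col}(Q)$ containing ${\rm row}(A_0)$; the hypothesis $\min(k_1,k_2) < k$ enters exactly once, in the count $\dim{\rm col}(Q) + \dim({\rm col}(X)^\perp) = k + N - k_1 > N$. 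This collapses all of the paper's degenerate cases into one uniform argument. Two small points deserve care in a final write-up: when ${\rm rank}(A_0) < k$ you must pad the factorization so that $Q$ genuinely has $k$ linearly independent columns (otherwise the intersection count can fail), and the clean justification of the rank bound is then ${\rm row}(A_0 + uv^\top) \subseteq {\rm col}(Q)$ rather than ${\rm row}(uv^\top) \subseteq {\rm row}(A_0)$, which need not hold for a padded $v$; either way ${\rm rank}(A_0 + uv^\top) \leq k$. What the paper's longer construction buys is that its alternative solutions are built entirely from the observed data $(X, Y, W, Z)$, supporting the authors' stated secondary aim of explicitly constructing infinite consistent families, whereas your choice of $v$ uses the row space of the true $A_0$, which is not recoverable from the queries when $k_1 < k$. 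For the non-uniqueness statement itself this distinction is immaterial, and your argument is the more economical proof of the lemma as stated.
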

\begin{proof}
Suppose $AX = Y$ and $A^\top W = Z$, where $X \in \R^{N \times k_1}$ and $W \in \R^{N \times k_2}$.   We now construct a matrix $B \neq A$ in several cases. 

{\bf Case 1: $\mathbf{p = q = 0}$.} Since $p=q=0$, the whole of ${\rm col}(X)$ and ${\rm col}(W)$ lie in the null spaces of $A$ and $A^\top$, respectively. We may trivially take $B = 2A$, so that $B \neq A$ if $A$ is nonzero. Otherwise, if $ A = 0$, we can construct a rank-1 matrix $B$ as follows. Because \refone{${\rm null}(X^\top)$ and ${\rm null}(W^\top)$} are nontrivial subspaces, we can select nonzero vectors \refone{$v \in {\rm null}(X^\top)$ and $u \in {\rm null}(W^\top)$}.  Then, define $B = uv^\top$. It is clear that $BX = 0$ and \refone{$B^\top W = 0$}, however $B$ is nonzero so $B \neq A$. 

{\bf Case 2: $\mathbf{p = 0}$ and $\mathbf{q > 0}$.} 
Since \refone{$q \leq k_1  < N$} and the columns of $\tilde W$ are linearly independent, there exist infinitely many matrices $P \in \R^{q \times N}$ such that $P\tilde W = I_{q \times q}$. Consider any matrix of the form $B = P^\top \tilde W^\top A$ for any such $P$. We note that $BX = P^\top \tilde W^\top AX = 0$ (as $p = 0$), $B^\top W = A^\top \tilde W P W = Z$ (as $P\tilde{W} = I_{q\times q}$), and ${\rm rank}(B)\leq k$. It remains to show that there is a choice of $P$ so that $B \neq A$. 

We demonstrate this by producing two distinct matrices $B_1 = P_1^\top \tilde W ^\top A$ and $B_2 = P_2^\top \tilde W ^\top A$ such that $B_1\neq B_2$ and conclude that at least one of $B_1$ or $B_2$ must differ from $A$. Since $q\leq \max\{k_1, k_2 \} < N$ there exists a nonzero vector $v$ such that $\tilde{W}^\top v = 0$. We select $P_1$ such that $P_1\tilde{W} = I_{q\times q}$ and $P_1v = 0$ but select $P_2$ such that $P_2\tilde{W} = I_{q\times q}$ and $P_2v = e_{1}$, where $e_{1}$ is the first canonical unit vector. We note that $B_1^\top v = A^\top \tilde{W}P_1v = 0$ but $B_2^\top v = A^\top \tilde{W}e_1 \neq 0$ so $B_1\neq B_2$. 

{\bf Case 3: $\mathbf{p > 0}$ and $\mathbf{q = 0}$.} 
This case follows by applying case 2 to $A^\top$. 

{\bf Case 4: $\mathbf{p > 0}$ and $\mathbf{q > 0}$.} 
Consider a family of possible $B$'s given by
\begin{equation} 
 B = \begin{bmatrix} \tilde Y & \tilde W \end{bmatrix} \begin{bmatrix}
 I_{p \times p} - C \tilde Z ^\top \tilde X & C \\
 (\tilde W^\top \tilde Y C - I_{q \times q}) \tilde Z ^\top \tilde X  &  I_{q \times q} - \tilde W^\top \tilde Y C 
 \end{bmatrix} \begin{bmatrix} \tilde X^\top \\ \tilde Z^\top \end{bmatrix},
\label{eq:family}
\end{equation} 
where $C$ is any $p\times q$ matrix. Any $B$ in~\cref{eq:family} satisfies $BX = Y$ since 
\begin{align*}
BX &= \begin{bmatrix} \tilde Y & \tilde W \end{bmatrix} \begin{bmatrix}
 I_{p \times p} - C \tilde Z ^\top \tilde X & C \\
 (\tilde W^\top \tilde Y C - I_{q \times q}) \tilde Z ^\top \tilde X  &  I_{q \times q} - \tilde W^\top \tilde Y C 
 \end{bmatrix}  \begin{bmatrix} \tilde X^\top X\\ \tilde Z^\top X \end{bmatrix}\\
 &= \begin{bmatrix} \tilde Y & \tilde W \end{bmatrix} \begin{bmatrix}
 I_{p \times p} - C \tilde Z ^\top \tilde X & C \\
 (\tilde W^\top \tilde Y C - I_{q \times q}) \tilde Z ^\top \tilde X  &  I_{q \times q} - \tilde W^\top \tilde Y C 
 \end{bmatrix} \begin{bmatrix} I_{ p\times p} & 0 \\  \tilde Z^\top \tilde X & 0\end{bmatrix} \\
 &=  \begin{bmatrix} \tilde Y & \tilde W \end{bmatrix} \begin{bmatrix}
 I_{p \times p} & 0 \\
 0  &  0
 \end{bmatrix} = \begin{bmatrix} \tilde Y & 0 \end{bmatrix} = Y,
\end{align*}
where we used the fact that $\tilde Z^\top \hat{X}$ is the zero matrix as ${\rm col}(\tilde{Z})\subseteq {\rm col}(A^\top)$ and ${\rm col}(\hat{X})\subset {\rm null}(A)$. A similar argument shows that $B^\top W = Z$. 
Moreover, ${\rm rank}(B)\leq {\rm rank}([ \tilde Y \ \tilde W])\leq k$, where the last inequality follows from the fact that ${\rm col}([\tilde Y \ \tilde W]) \subseteq {\rm col}(A)$. This means for any choice of $C$, the matrix $B$ in~\cref{eq:family} satisfies $BX=Y$, $B^\top W = Z$, and ${\rm rank}(B)\leq k$. Now, we just have to show that there is a choice of $C$ so that $B\neq A$. 

{\bf Case 4 (i): $\mathbf{k_1 = \min\{k_1, k_2\}}$.} 
In this case, we know that $p\leq k_1 < k$, so there exists a nonzero vector $v \in {\rm col}(A^\top)$ such that $\tilde X^\top v = 0$. Since $v\not\in {\rm null}(A)$, we have $Av \neq 0$ and we now give a choice of $C$ so that $B\neq A$. Note that we have
$$
Bv = (I - \tilde W \tilde W^\top) \tilde Y C \tilde Z^\top v + \tilde W \tilde Z^\top v.
$$
If $(I - \tilde W \tilde W^\top) \tilde Y C \tilde Z^\top v\neq 0$ is nonzero for any matrix $C$, then we can generate two different $B$'s by replacing $C$ by $2C$. Since these two $B$s cannot both be equal to $A$, the matrix $A$ is not uniquely determined by its matrix-vector products. On the other hand, if $(I - \tilde W \tilde W^\top) \tilde Y C \tilde Z^\top v = 0$ for all choices of the matrix $C$, we conclude that \refone{$\tilde Z^\top v = 0$} so that $Bv = 0$. Since \refone{$Av \neq 0$}, we must have $A\neq B$.  

{\bf Case 4 (ii): $\mathbf{k_2 = \min\{k_1, k_2\}}$.} 
Now, $q \leq k_2 < k$, and there exists a nonzero vector $u \in {\rm col} (A)$ such that $\tilde W^\top u = 0$. Since $u \not \in \text{null}(A^\top)$, we have $A^\top u \neq 0$ and $B^\top u = (I - \tilde X \tilde X^\top)\tilde Z C^\top \tilde  Y^\top u + \tilde X \tilde Y^\top u$. Analogously, to case 4(i) we have $B \neq A$.
\end{proof}

In~\cref{lem:lowrank}, we find that we need $k_1\geq k$ and $k_2\geq k$ to hope to exactly recover a rank-$k$ matrix. Therefore, one needs at least $2k$ matrix-vector products. The randomized SVD is a stable recovery algorithm using only $2k+p$, making it near-optimal in terms of the number of matrix-vector products. 
\reftwo{ As such, we obtain the equality  $\text{QC}(\text{$N \times N$ rank-$k$ matrices}) = 2k$. 
}

\subsubsection{Circulant, Toeplitz, and Hankel matrices} 
\reftwo{We now find the query complexities of circulant, Toeplitz, and Hankel matrices. For these recovery problems, we prefer to use randomized inputs, which have a natural extension to Gaussian processes in infinite dimensions. An $N \times N$ circulant matrix $C_c$ is determined by one vector $c \in \R^N$, where $c = [c_0; c_1; \cdots; c_{N-1}]$:}
\[
C_c = \begin{bmatrix}c_0 & c_{N-1} & \cdots & c_2 & c_1 \\ c_1 & c_0 & c_{N-1} &  & c_2 \\ \vdots & c_1 & c_0 & \ddots & \vdots \\ c_{N-2} & & \ddots & \ddots & c_{N-1} \\ c_{N-1} & c_{N-2} & \cdots & c_1 & c_0  \end{bmatrix}. 
\]
\refone{One can recover $C_c$ with one matrix-vector product $e_1$, which extracts the vector $c$ exactly, allowing us to recover all of $C_c$.} \reftwo{Thus, $\text{QC}(\text{circulant matrices}) = 1$. This is  consistent with \cref{eq:lowerbound}, as the family of circulant matrices is clearly linearly parametrized. }
\reftwo{We would also like to develop a randomized circulant recovery algorithm using an input vector $g$, a random Gaussian vector in $\R^N$.} Because $C_c$ can be viewed as the integral kernel of a convolution operator \reftwo{and convolution is commutative}, we have $C_c g = C_g c = y$. One can easily solve $C_g c = y$ for the vector $c$, as circulant matrices are diagonalized by the discrete Fourier transform (DFT) matrix. That is, $C_g = \frac{1}{N}F^{-1} \Lambda F$, where $F$ is the $N \times N$ DFT matrix and $\Lambda$ is a diagonal matrix with diagonal entries given by $Fg$. We find that $c = NF^{-1} \Lambda^{-1} F y$, which can be computed in $\mathcal{O}(N\log N)$ operations using the fast Fourier transform (FFT).  

\refone{One can use two matrix-vector products to recover an $N \times N$ Toeplitz matrix $T$ as it is uniquely defined among all Toeplitz matrices by its first column $t_1$ and first row $t_2^\top$, where $t_1, t_2 \in \R^N$. The deterministic matrix-vector products $Te_1$ and $T^\top e_1$ will extract  the parameters which define $T$. } This  also realizes the bound in \cref{eq:lowerbound}, as $T$ is defined by $2N-1$ parameters. \reftwo{Thus, $\text{QC}(\text{Toeplitz matrices}) = 2$. }\reftwo{However, we  again prefer to recover $T$ using the two random matrix-vector products $Tg = y$ and $Th = z$, where $g$ and $h$ are random Gaussian input vectors $\R^N$, with i.i.d.~entries. }  Since a Toeplitz matrix $T$ is constant along its diagonals, there exists a circulant matrix $C_a$ such that 
\[
Tg = \begin{bmatrix} I_N & 0_N \end{bmatrix} C_a \begin{bmatrix} g \\ 0 \end{bmatrix}, \qquad T h = \begin{bmatrix} I_N & 0_N \end{bmatrix} C_{ a}\begin{bmatrix} h \\ 0  \end{bmatrix},
\] 
where $a$ is the $2N\times 1$ vector given by \reftwo{$a = [t_1; 0;  t_2(N\!:\!-1\!:\!2)]$},\footnote{Here,$t_2(N\!:\!-1\!:\!2)$ is the vector obtained by removing the first entry of $t_2$ and then reversing the order of the entries.} $I_n$ is the $N \times N$ identity matrix, and $0_N$ is the $N \times N$ matrix of zeros. We now note that 
$$C_a \begin{bmatrix} g\\ 0 \end{bmatrix} = C_{[ g ; 0 ]} a, \qquad C_{ a}\begin{bmatrix} h\\ 0  \end{bmatrix} =C_{[h; 0]}   a. $$
Since left multiplication by $[I_N \ 0_N]$ restricts to the top half of the output, each product gives us $N$ equations in the $2N-1$ entries of $a$.    Putting these together yields $2N$ equations in $2N-1$ unknowns, and one can then solve for the entries of $t_1$ and $t_2$, together with the constraint that the first entry of $t_1$ and $t_2$ are equal. 

For example, in the linear system for the case of $N = 3$ is as follows:
$$
\begin{bmatrix}
g_1 & 0 & 0 & g_3 & g_2 \\
g_2 & g_1 & 0 & 0 & g_3 \\
g_3 & g_2 & g_1 & 0 & 0 \\
h_1 & 0 & 0 & h_3 & h_2 \\
h_2 & h_1 & 0 & 0 & h_3 \\
h_3 & h_2 & h_1 & 0 & 0
\end{bmatrix}
\begin{bmatrix}
t_{11} \\ t_{12} \\ t_{13} \\ t_{23} \\ t_{22}
\end{bmatrix}
 = \begin{bmatrix}
 y_1 \\ y_2 \\ y_ 3 \\ z_1 \\ z_2 \\ z_3
 \end{bmatrix}.
$$
The columns of this linear system can be permuted so that the last two columns are moved to the front. This  yields  a \refone{$2N $ by $2N-1$} Sylvester matrix. Because a Sylvester matrix satisfies a \reftwo{low-rank} displacement structure, we strongly suspect that an $\mathcal O(N^2)$ solver can be used to recover $T$~\cite{DisplacementGEsolve}, or possibly even an $\mathcal O(N \log N)$ solver~\cite{superfasttoeplitz}.

 While this algorithm recovers a   Toeplitz matrix exactly, there has also been recent work on the recovery of a near-optimal approximation of a Toeplitz matrix in the sense of the Frobenius norm using sublinear query complexity, and this approximation is itself Toeplitz~\cite{toeplitzrecovery}. This approach  considers query complexity in terms of both  entry-wise sample complexity and vector sample complexity.
 
Finally, one can recover an $N \times N$ Hankel  matrix $H$ with two matrix-vector products, as suggested by the bound \cref{eq:lowerbound}. Any Hankel matrix is a Toeplitz matrix with permuted columns, i.e., $H = PT$ for some \refone{exchange} matrix $P$ and Toeplitz matrix $T$. Thus, if $g$ and $h$ are random Gaussian vectors, one recovers $H$ by recovering $T$ since $Hg = y$ and $Hh = z$ are equivalent to $Tg = Py$ and $Th = Pz$, respectively. 

\subsubsection{Symmetric matrices}
Unfortunately, there are some structured matrices for which one needs many more matrix-vector products than suggested by the lower bound in~\cref{eq:lowerbound}. If $A$ is known to be a symmetric matrix, then it has  $N(N+1)/2$ parameters, suggesting that $\lceil (N+1)/2\rceil$ queries might be enough. However, a simple argument reveals that a symmetric matrix cannot be recovered from fewer than $N$ matrix-vector products, regardless of the input vectors. 
\begin{lemma}\label{lemma:symm}
An $N\times N$ symmetric matrix is never uniquely determined by $N-1$ matrix-vector product queries. \reftwo{Therefore, $\text{QC}(\text{symmetric matrices}) = N$. }
\end{lemma}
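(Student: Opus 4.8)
The plan is to prove the contrapositive-flavored statement that underlies the lower bound: no matter which $N-1$ input vectors are queried, there exist two distinct symmetric matrices producing identical matrix-vector products, so no algorithm can uniquely recover the matrix. Combined with the column-by-column upper bound $\text{QC}(\text{symmetric matrices}) \le N$, this yields the claimed equality, despite the naive parameter count $N(N+1)/2$ suggesting roughly $N/2$ queries should suffice. Since $A = A^\top$, a query to $A^\top$ is the same as a query to $A$, so it suffices to consider $N-1$ queries $Au_1,\dots,Au_{N-1}$.

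First I would handle the non-adaptive case. Fix any input vectors $u_1,\dots,u_{N-1} \in \R^N$. Their span $U = \mathrm{span}(u_1,\dots,u_{N-1})$ has dimension at most $N-1$, so $U^\perp$ is nontrivial and we may pick a nonzero $w \in U^\perp$. Given any symmetric $A$, set $B = A + ww^\top$. Then $B$ is symmetric, $B \ne A$ since $ww^\top \ne 0$, and for each $j$ we have $Bu_j = Au_j + w(w^\top u_j) = Au_j$ because $w \perp u_j$. Hence $A$ and $B$ are indistinguishable from these $N-1$ queries; taking $A = 0$, $B = ww^\top$ already exhibits the ambiguity.

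The one subtlety, and the step needing the most care, is the adaptive case, where $u_k$ may depend on the earlier responses $Au_1,\dots,Au_{k-1}$. Here I would argue a posteriori: run the recovery procedure against a particular symmetric matrix $A$, and let $u_1,\dots,u_{N-1}$ be the queries it actually issues. As above, choose a nonzero $w \perp \mathrm{span}(u_1,\dots,u_{N-1})$ and set $B = A + ww^\top$, a distinct symmetric matrix. By induction on $k$: the procedure issues $u_1$ before seeing any data, hence issues $u_1$ against $B$ too; and if it has issued $u_1,\dots,u_{k-1}$ and seen $Au_1,\dots,Au_{k-1}$, then since $Bu_i = Au_i$ for all $i \le k-1$ it issues the same $u_k$ against $B$. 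So the procedure sees identical data on input $B$ as on input $A$ and returns the same matrix for both, failing to recover at least one since $A \ne B$. This gives $\text{QC}(\text{symmetric matrices}) \ge N$, and with the upper bound, $\text{QC}(\text{symmetric matrices}) = N$.
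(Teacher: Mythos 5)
Your construction is exactly the paper's: perturb $A$ by $ww^\top$ with $w$ orthogonal to the span of the $N-1$ queries, which preserves symmetry and all observed products. The paper states this more tersely (without spelling out the adaptive case or the reduction of $A^\top$ queries to $A$ queries), but the argument is the same.
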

\begin{proof}
Suppose a symmetric matrix $A$ satisfies $Ax_j = y_j$ for $1 \leq j \leq N-1$. Consider the symmetric matrix $B = A + vv^\top$, where $v$ is any nontrivial vector orthogonal to the span of $\{x_1, \dots , x_{N-1}\}$. The matrix $B$ is symmetric, as it is the sum of two symmetric matrices. By construction $B\neq A$, but $Bx_j = y_j$ for $1 \leq j \leq N-1$. 
\end{proof}
Of course, $N$ matrix-vector queries can be used to recover a symmetric matrix. \reftwo{We note that this proof is constructive and quantifies the uniqueness of possible symmetric matrices $B$ satisfying the same $N-1$ matrix-vector products as $A$.} 
The proof in~\cref{lemma:symm} also includes the recovery of positive definite matrices, as if $A$ is positive definite, then so is $A+vv^\top$. 

\subsubsection{Orthogonal matrices} If $A$ is known to be an orthogonal matrix, then one needs $N$ matrix-vector products to recover $A$.

\begin{lemma}\label{lemma:ortho}
An $N \times N$ orthogonal matrix is never uniquely determined by $N-1$ matrix-vector product queries. \reftwo{Therefore, $\text{QC}(\text{orthogonal matrices}) = N$.}
\end{lemma}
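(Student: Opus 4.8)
The plan is to mirror the structure of the proof of \cref{lemma:symm}: given an orthogonal $A$ consistent with $N-1$ queries, I will construct a genuinely different orthogonal matrix $B$ consistent with the same queries. The difference is that instead of \emph{adding} a rank-one symmetric perturbation to $A$, I will \emph{right-multiply} $A$ by a nontrivial orthogonal matrix (a Householder reflector) that acts as the identity on the span of all the input vectors, so that the product stays orthogonal and the query outputs are unchanged.

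First I would normalize the query transcript. Suppose $m$ queries are made to $A$ and $n$ to $A^\top$ with $m+n=N-1$, say $Ax_j=y_j$ for $1\le j\le m$ and $A^\top v_i=z_i$ for $1\le i\le n$. Adaptivity is harmless, since the matrix $B$ we build will reproduce every output of $A$ on these inputs, so an adaptive procedure would generate exactly the same sequence of inputs for $B$ as for $A$. Let $V={\rm span}\{x_1,\dots,x_m,z_1,\dots,z_n\}\subseteq\R^N$. Because $\dim V\le m+n=N-1<N$, there is a unit vector $u$ orthogonal to $V$. Set $B=A\,(I_N-2uu^\top)$, the product of $A$ with the Householder reflector across $u^\perp$.

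Then I would verify the three required properties. First, $B$ is orthogonal, being a product of two orthogonal matrices (one checks $(I_N-2uu^\top)^\top(I_N-2uu^\top)=I_N$ using $\|u\|=1$). Second, $B\neq A$, because $B-A=-2(Au)u^\top$, which is nonzero since $A$ is invertible (so $Au\neq 0$) and $u\neq 0$. Third, $B$ is consistent with every query: for $1\le j\le m$ we have $Bx_j=A\bigl(x_j-2u(u^\top x_j)\bigr)=Ax_j=y_j$ because $u^\top x_j=0$; and for $1\le i\le n$, using that $I_N-2uu^\top$ is symmetric, $B^\top v_i=(I_N-2uu^\top)A^\top v_i=(I_N-2uu^\top)z_i=z_i$ because $u^\top z_i=0$. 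Hence $A$ is not uniquely determined by $N-1$ queries, and combined with the trivial column-by-column upper bound of $N$ queries this gives $\text{QC}(\text{orthogonal matrices})=N$.

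There is essentially no hard step; the only thing to be careful about is the bookkeeping for queries to $A^\top$, which must be folded into the \emph{same} subspace $V$ so that the single reflector $I_N-2uu^\top$ simultaneously preserves all inputs appearing on both sides. If one instead only chose $u$ orthogonal to the $x_j$'s, the transposed queries could be violated; the key observation is that taking $u\perp z_i$ as well forces $(I_N-2uu^\top)z_i=z_i$ and hence $B^\top v_i=z_i$.
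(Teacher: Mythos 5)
Your proof is correct and uses essentially the same construction as the paper: right-multiplying $A$ by a Householder reflector $I_N - 2uu^\top$ with $u$ orthogonal to the query inputs. Your version is in fact slightly more careful than the paper's, which only writes out queries of the form $Ax_j = y_j$; your observation that $u$ must also be chosen orthogonal to the outputs $z_i = A^\top v_i$ of any transposed queries is exactly the right way to make the argument cover the general query model.
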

\begin{proof}
Suppose an orthogonal matrix $A$ satisfies $Ax_j = y_j$ for $1 \leq j \leq N-1$. Consider the matrix $B = A(I - 2\frac{vv^\top\!}{v^\top\! v})$, where $v$ is any nontrivial vector orthogonal to the span of $\{x_1, \dots , x_{N-1}\}$. The matrix $B$ is orthogonal, as it is the product of two orthogonal matrices. It is easy to check that $B\neq A$, but $Bx_j = y_j$ for $1 \leq j \leq N-1$. 
\end{proof}

\subsubsection{Toeplitz-like matrices}
We say that a matrix $A\in\mathbb{R}^{N\times N}$ is Toeplitz-like if it satisfies the following so-called displacement structure~\reftwo{\cite[Part II, chapt.~2]{heinig2022algebraic},  \cite[chapt.~7]{kailath1995displacement}}:
\begin{equation} 
Z_1 A - AZ_{-1} = GH^\top, \qquad Z_t = \begin{bmatrix}0 & t \\ I_{N-1} & 0 \end{bmatrix}, \quad G, H\in\mathbb{R}^{N\times 2}, 
\label{eq:Displacement}
\end{equation} 
where $I_{N-1}$ is the $(N-1)\times (N-1)$ identity matrix. Since the eigenvalues of $Z_1$ and $Z_{-1}$ are disjoint, the matrix $A$ is uniquely defined by a rank-$k$ matrix $GH^T$ with $k = 2$~\cite{Sylvester}. Therefore, we  recover $GH^T$ using matrix-vector products with $A$ and $A^\top$. Let $X$ and $Y$ be \refone{$N \times (k + p)$ matrices with i.i.d.~random Gaussian entries}. Then, from~\cref{eq:Displacement}, we find that 
\[
GH^\top X = Z_1 AX - AZ_{-1} X  \quad \text{and}  \quad HG^\top Y = A^\top Z_1^\top H - Z_{-1}^\top A^\top H
\]
The matrix $GH^\top$ can be recovered by the randomized Nystr\"{o}m method~\cite{nakatsukasa2020fast,tropp2017randomized}:
\[
GH^\top = GH^\top X(Y^\top GH^\top X)^\dagger (HG^\top Y)^\top,
\]
where $^{\dagger}$ denotes the pseudoinverse. This means that $GH^\top$ can be recovered with $m = 2p + 4$ and $n = 2p+4$ for a total of $4p + 8$ queries.  Once $G$ and $H$ are recovered, the matrix $A$ can be computed by solving~\cref{eq:Displacement} using the Bartels--Stewart algorithm in $\mathcal{O}(N^3)$ operations~\cite{bartels1972solution}. We note that one may reduce the number of matrix-vector products by setting  $X = \begin{bmatrix}g \,|\, Z_{-1}g\,|\, \cdots \,|\, Z_{-1}^{p+1}g \end{bmatrix}$ and $Y = \begin{bmatrix}h \,|\, Z_{1}^\top h\,|\, \cdots \,|\, (Z_{1}^\top)^{p+1}h \end{bmatrix}$, totaling $2p + 6$ queries instead. However, in this case, the inputs $X$ and $Y$ do not have independent columns, making the recovery algorithm's theoretical analysis challenging. Due to their displacement structure, similar recovery algorithms are possible for Hankel-like, Toeplitz-and-Hankel-like, and B\'{e}zout-like matrices~\cite{bini2004bernstein}. 

Matrices with a globally defined structure, such as the special matrices discussed in this section, are often easier to recover using matrix-vector products. For the rest of this paper, we focus on recovering the more challenging hierarchically structured matrices such as HSS (see~\cref{sec:HSSrecovery}) and HODLR (see~\cref{sec:HODLRrecovery}). 

\section{Hierarchical semiseparable matrix recovery}{\label{sec:HSSrecovery}}
An $N\times N$ rank-$k$ HSS matrix is a special type of a hierarchical low-rank matrix that we denote by $H_{N,k}$. To illustrate the recursive structure of an HSS matrix, we start by assuming that $N$ is a power of $2$. When $N$ is a power of $2$, $H_{N,k}$ has the following recursive structure: 
$$
H_{N, k} =\reftwo{ 
\left[ 
\resizebox{.85\textwidth}{!}{
$
\begin{array}{c|c}
\begin{matrix}[c|c] &  \\ $\mbox{\Huge $H_{11}$}$   & $\mbox{\Huge $W(1:N/4, :)H_{12} V(N/4 + 1:N/2, :)^\top $}$  \\ & \\ \hline & \\ $\mbox{\Huge $W(N/4 + 1:N/2, :) H_{21} V(1:N/4, :)^\top$}$  & \begin{matrix}[c c] $\mbox{\Huge $H_{22}$}$  \\ \end{matrix}\\ & \\ \end{matrix}   &  $\mbox{\Huge $WZ ^\top$}$ \\\hline$\mbox{\Huge $UV ^\top$}$ & \begin{matrix}[c|c] & \\ $\mbox{\Huge $H_{33}$}$   & $\mbox{\Huge $ U(1:N/4, :) H_{34} Z(N/4 + 1:N/2, :)^\top$}$  \\ &  \\\hline  & \\  $\mbox{\Huge $ U(N/4 + 1:N/2, :) H_{43} Z(1: N/4, :)^\top$}$    &   $\mbox{\Huge $H_{44}$}$ \\ & \\ \end{matrix}
\end{array} $}
\right] },
$$
where \refone{$U, V, W, Z \in \R^{N/2 \times k}$} and the off-diagonal blocks involve $H_{ij} \in \R^{k \times k}$ for $j\neq i$. Each of the diagonal blocks $H_{jj}$ for $1\leq j\leq 4$ can be further recursively partitioned into two rank-$k$ off-diagonal blocks, which also inherit the corresponding restricted row and column spaces of the larger blocks. That is, $H_{jj}$ has the same structure as $H_{N/4,k}$ for $1\leq j\leq 4$. The matrix $H_{N,k}$ is recursively subdivided until the final diagonal blocks have a size that is the smallest power of 2 greater than $k$. Thus, the final diagonal blocks are $2^\ell \times 2^\ell$, where $\ell = \lfloor \log_2(k)\rfloor + 1$. \reftwo{In this section, we derive a recovery algorithm for $\mathcal H_{N, k}$, the family of   $N \times N$ rank-$k$ HSS matrices, where $N$ and $k$ are known in advance.}

We first count the  parameters that define the structure of $H_{N, k}$. There are 
\begin{equation}\label{eq:dof}
\text{\# parameters} = \underbrace{4k\frac{N}{2}}_{U,V,W,Z} + \underbrace{k^2 (N / 2^{\ell - 1} - 4)}_{\text{off-diag. blks}} + \underbrace{2^{2\ell} \frac{N}{2^\ell}}_{\text{diag. blks}} = N\left(2k + \frac{k^2}{2^{\ell - 1}} - \frac{4k^2}{N} + 2^\ell \right)
\end{equation}
defining $H_{N, k}$. However, to apply the bound in~\cref{eq:lowerbound}, we require that \reftwo{$\mathcal H_{N, k}$ is a linearly parametrized family, so that} the equations from matrix-vector product queries are linear in the parameters of $H_{N, k}$. This is not the case. However, if we recover $U, V, W$, and $Z$ first, the equations will be linear in the remaining parameters. Therefore, by \cref{eq:lowerbound}, we need at least $\lceil \frac{k^2}{2^{\ell - 1}} - \frac{4k^2}{N} + 2^\ell \rceil$ more matrix-vector products to fully recover $H_{N,k}$. Since $k\leq 2^\ell\leq 2k$ and in general, $N \gg k^2$,  $4k$ queries is more than enough. 

If $H_{N, k}$ is symmetric, there are $kN$ parameters defining $U$ and $V$, $k^2(N/2^\ell - 2)$ parameters in the off-diagonal blocks, and $2^{\ell-1}(2^\ell + 1) N/2^\ell = 2^{-1} (2^\ell + 1) N $ parameters in the diagonal blocks. Thus, we have
\begin{equation}\label{eq:symmHSSdof}
    \text{ \# parameters of symmetric } H_{N, k} = \underbrace{kN}_{U, V} + \underbrace{k^2(N/2^\ell - 2)}_{\text{off-diag. blks}} + \underbrace{\frac{(2^\ell + 1)N}{2}}_{\text{diag. blocks}},
\end{equation}
and a symmetric rank-$k$ HSS recovery requires at least $\lceil \frac{k^2}{2^\ell} - \frac{2k^2}{N} + \frac{ 2^\ell + 1}{2} \rceil$ queries, in addition to the number of queries needed to recover $U$ and $V$. Using $k \leq 2^\ell \leq 2k$ and $N \gg k^2$ as before,  $3k$ queries is  more than enough.

\subsection{Existing approaches} There are a few existing algorithms for HSS matrix recovery. Peeling algorithms utilize the same recursive  elimination strategy as existing algorithms for HODLR recovery~\cite{martinsson2016compressing, martinsson2022coloring}, and require $\mathcal O ((k + p) \log_2 (N))$ matrix-vector products. \reftwo{These algorithms yield an  upper bound on $\text{QC}(\text{HSS rank-$k$})$ and  $\text{QC}(\text{HSS rank-$k$})$ of $\mathcal O (k \log_2(N))$.} In theory,   peeling algorithms can be numerically unstable because the pivoting strategy in elimination relies on the hierarchical structure of the matrix, rather than the magnitude of its entries.

  A recent HSS recovery algorithm, which we refer to as the Levitt--Martinsson HBS algorithm, does not rely on peeling, and instead only requires $\mathcal O(k)$ matrix-vector products, as suggested by the lower bound~\cref{eq:lowerbound}~\cite{martinsson2022HBS}\reftwo{. Thus, $\text{QC}(\text{HSS rank-$k$}) = \mathcal O(k)$.} It leverages a telescoping factorization of an HSS matrix and recursively recovers its entries in an order based on the hierarchical structure. Our algorithm also achieves $\mathcal O (k)$ matrix-vector products and exploits hierarchical structure for a projection-based method for recovery. Essentially, our algorithm solves a linear system in the parameters of an HSS matrix given in~\cref{eq:dof}, which is generated from $\mathcal O(k)$ matrix-vector queries. \reftwo{From the existence of the Levitt--Martinsson HBS algorithm~\cite{martinsson2022HBS} and  a result by Otto~\cite{otto}, we know that using the same number of matrix-vector products as the Levitt--Martinsson HBS algorithm, our linear system has a unique solution with probability 1. In practice, we observe that the linear system has a unique solution with even fewer matrix-vector products than this. We believe this is due to some oversampling in the Levitt--Martinsson algorithm.} While our algorithm and that of~\cite{martinsson2022HBS} require the same number of matrix-vector products and are both projection-based, we recover the entries of an HSS matrix all at once, exploiting  the sparsity structure and magnitude of entries in the linear system. In contrast,~\cite{martinsson2022HBS} recovers the HSS matrix in sequential steps based on the telescoping factorization.

Our numerical results in~\cref{sec:numericalHSS} suggest that our algorithm performs better than that of~\cite{martinsson2022HBS} in terms of accuracy, especially when the rank of the HSS matrix is low. However, one trade-off here is that the Levitt--Martinsson HBS algorithm in~\cite{martinsson2022HBS} achieves linear complexity and \refone{is} more efficient for large ranks. Our algorithm can be more \refone{computationally} expensive due to our QR strategy for solving a large linear system in the parameters of an HSS matrix. In practice, we use a multifrontal multithreaded sparse QR factorization causes our algorithm's computational time to grow like $\mathcal O(N^{1.2})$~\cite{davis2011algorithm}. We also observe in~\cref{sec:related} that our linear system strategy is a more robust solution to related recovery problems: when the HSS matrix has only numerically low-rank blocks and when matrix-vector products are error-prone. Overall, our two algorithms are complementary, as that of~\cite{martinsson2022HBS} is better suited to the telescoping factorization structure for an HSS matrix. In contrast, ours is designed to recover an HSS structure in a format that stores the parameters defining each subblock of the HSS matrix. 

We now  describe a randomized algorithm to recover $H_{N,k}$. We do so by progressively increasing the complexity of the HSS structure from symmetric $H_{4,1}$ (see~\cref{sec:SymmetricRandomRankOneHSS}) to general symmetric $H_{N,k}$ (see~\cref{sec:SymmetricRandomRankKHSS}) before extending to general HSS matrices and so-called restricted HSS matrices (see~\cref{sec:restrictedHSS}). 

\subsection{Symmetric rank-1 HSS matrices}\label{sec:SymmetricRandomRankOneHSS}
We begin by considering the recovery of symmetric HSS matrices, which is conceptually easier to explain. In turn, we consider symmetric $H_{4,1}$ (see~\cref{sec:H41recovery}), symmetric $H_{N,1}$ where $N$ is a power of $2$ (see~\cref{subsec:rank1hss}), and the recovery of symmetric $H_{N,k}$ (see~\cref{sec:SymmetricRandomRankKHSS}). \reftwo{We emphasize that our recovery algorithm is not necessarily optimal for a matrix as small as $4 \times 4$; however, we consider this initial pet example to illustrate the algorithm.}

\subsubsection{Recovering $\mathbf{H_{4, 1}}$}\label{sec:H41recovery} 
Consider the recovery of a $4\times 4$ symmetric rank-1 HSS matrix, which can be expressed as
\begin{equation} 
H_{4, 1} = 
\left[
\begin{array}{c|c}
  \begin{matrix}
  a_1 & a_2 \\
  a_2 & a_3
  \end{matrix}
  & vu^\top \\
\hline
  uv^\top &
  \begin{matrix}
  a_4 & a_5 \\
  a_5 & a_6
  \end{matrix}
\end{array}
\right], \qquad a_1,\ldots,a_6\in\mathbb{R}, \quad u,v\in\mathbb{R}^{2}.
\label{eq:HNk} 
\end{equation} 
Hence, $H_{4,1}$ is defined by 6 parameters in the entries $a_1, \dots, a_6$ and 4 more parameters in the vectors $u$ and $v$. We can stably recover $uv^\top$ by noting that 
\begin{equation} 
H_{4,1} \begin{bmatrix}x_1\\x_2\\0\\0 \end{bmatrix} = \begin{bmatrix} * \\ * \\ u v^\top\begin{bmatrix} x_1\\x_2\end{bmatrix}  \end{bmatrix},\qquad H_{4,1} \begin{bmatrix}0\\0\\y_3\\y_4 \end{bmatrix} = \begin{bmatrix} v u^\top\begin{bmatrix} y_3\\y_4\end{bmatrix}\\ * \\ *  \end{bmatrix},
\label{eq:H41matvecs} 
\end{equation} 
where $*$ denotes irrelevant entries. Thus, we  compute matrix-vector products with $uv^\top$ and $vu^\top$ by querying $H_{4,1}$. We use the randomized SVD to stably recover $u$ and $v$ in a total of $2+p$ queries.  Finally, we use two matrix-vector products of the form: 
\begin{equation} 
H_{4,1} \!\! \begin{bmatrix}g_1\\g_2\\g_3\\g_4 \end{bmatrix}  \!\! =\!\!  \begin{bmatrix}a_1g_1 + a_2g_2 \\ a_2g_1 + a_3g_2 \\ a_4g_3 + a_5g_4 \\ a_5g_3 + a_6g_4\end{bmatrix} + \left[\begin{array}{c|c}
  \begin{matrix}
  0 & 0 \\
  0 & 0
  \end{matrix}
  & vu^\top \\
\hline
  uv^\top &
  \begin{matrix}
  0 & 0\\
 0 & 0
  \end{matrix}
\end{array}
\right]\!\! \begin{bmatrix}g_1\\g_2\\g_3\\g_4 \end{bmatrix},
\label{eq:H41matvecs2} 
\end{equation} 
where $g_1,\ldots,g_4$ are standard Gaussian i.i.d.~random numbers, to set up an $8\times 6$ linear system for $a_1,\ldots,a_6$. The exact rectangular linear system is given by 
\begin{equation} 
\begin{bmatrix}
g_1 & g_2& 0 & 0 & 0 & 0\\
0 & g_1 & g_2 & 0 & 0 & 0  \\
0 & 0 & 0 & g_3 & g_4 & 0 \\
0 & 0 & 0 & 0 & g_3 & g_4 \\
g_1' & g_2' & 0 & 0 & 0 & 0\\
0 & g_1' & g_2' & 0 & 0 & 0  \\
0 & 0 & 0 & g_3' & g_4' & 0 \\
0 & 0 & 0 & 0 & g_3' & g_4'
\end{bmatrix} 
\begin{bmatrix}
a_1 \\
a_2\\
a_3\\
a_4\\
a_5\\
a_6
\end{bmatrix} = \begin{bmatrix}H_{4,1} \!\! \begin{bmatrix}g_1\\g_2\\g_3\\g_4 \end{bmatrix} - \left[
\begin{array}{c|c}
  \begin{matrix}
  0 & 0 \\
  0 & 0
  \end{matrix}
  & vu^\top \\
\hline
  uv^\top &
  \begin{matrix}
  0 & 0\\
 0 & 0
  \end{matrix}
\end{array}
\right]\!\! \begin{bmatrix}g_1\\g_2\\g_3\\g_4 \end{bmatrix}\\[20pt] H_{4,1} \!\! \begin{bmatrix}g_1'\\g_2'\\g_3'\\g_4' \end{bmatrix} - \left[
\begin{array}{c|c}
  \begin{matrix}
  0 & 0 \\
  0 & 0
  \end{matrix}
  & vu^\top \\
\hline
  uv^\top &
  \begin{matrix}
  0 & 0\\
 0 & 0
  \end{matrix}
\end{array}
\right]\!\! \begin{bmatrix}g_1'\\g_2'\\g_3'\\g_4' \end{bmatrix}  \end{bmatrix}, 
\label{eq:leastsquares}
\end{equation} 
which can trivially be decoupled into two $4\times 3$ rectangular linear systems problems to solve for $a_1,a_2,a_3$ and $a_4,a_5,a_6$. Here, $g_1',\ldots,g_4'$ are also standard iid Gaussians. This means that we can recover $H_{4, 1}$ with a total of $4+p$ matrix-vector product queries. Of course, $4 + p>4$, so the naive algorithm of recovering $H_{4,1}$ column-by-column is better here; however, these ideas extend to recovering $H_{N,1}$ for larger $N$. 

\subsubsection{Recovering an $\mathbf{N \times N}$ rank-1 HSS matrix, where $\mathbf{N}$ is a power of 2}\label{subsec:rank1hss}
A similar procedure for $H_{4,1}$ (see~\cref{sec:H41recovery}) works for the recovery of $H_{N,1}$, where $N$ is a power of $2$. Note that for any $x,y\in\mathbb{R}^{N/2}$ we have 
\[
H_{N,1}\begin{bmatrix}x \\ {\bf 0}_{N/2} \end{bmatrix} =\begin{bmatrix} * \\ uv^\top x\end{bmatrix}, \qquad H_{N,1}\begin{bmatrix}{\bf 0}_{N/2} \\ y \end{bmatrix} =\begin{bmatrix} vu^\top y\\ * \end{bmatrix},
\]
where ${\bf 0}_{N/2}$ is the zero vector of length $N/2$ and $*$ is a vector of length $N/2$ with arbitrary entries. Thus, we can access matrix-vector products with $uv^\top$ and $vu^\top$ by directly querying $H_{N,1}$. This means we can apply the randomized SVD to stably recover $u$ and $v$ in a total of $2 + p$ queries. 

By setting $k = 1$ in~\cref{eq:symmHSSdof} and counting the parameters other than $u$ and $v$, there are $2N-2$ parameters remaining to recover in $H_{N,1}$.  We construct a rectangular linear system whose solution is the remaining parameters in a similar way to~\cref{eq:leastsquares}. Each matrix-vector query with a random i.i.d.~standard Gaussian vector yields $N$ linear equations in the remaining parameters, so  two matrix-vector products yield a $2N\times (2N-2)$ rectangular linear system.  The rectangular linear system has a recurring ``staircase'' structure due to the HSS structure (see~\cref{fig:H1H2}, left), and trivially decouples into two $N\times (N-1)$ rectangular linear systems. We observe that the multifrontal multithreaded sparse QR factorization~\cite{davis2011algorithm} takes less than $5$ seconds to solve the rectangular linear system for $N = 2^{18} = 262,\!144$, and has a computational time complexity of about $\mathcal{O}(N^{1.2})$ (see~\cref{fig:H1H2}, right). 

\begin{figure}
  \centering
  \begin{minipage}{.49\textwidth} 
  \begin{overpic}[width=\textwidth]{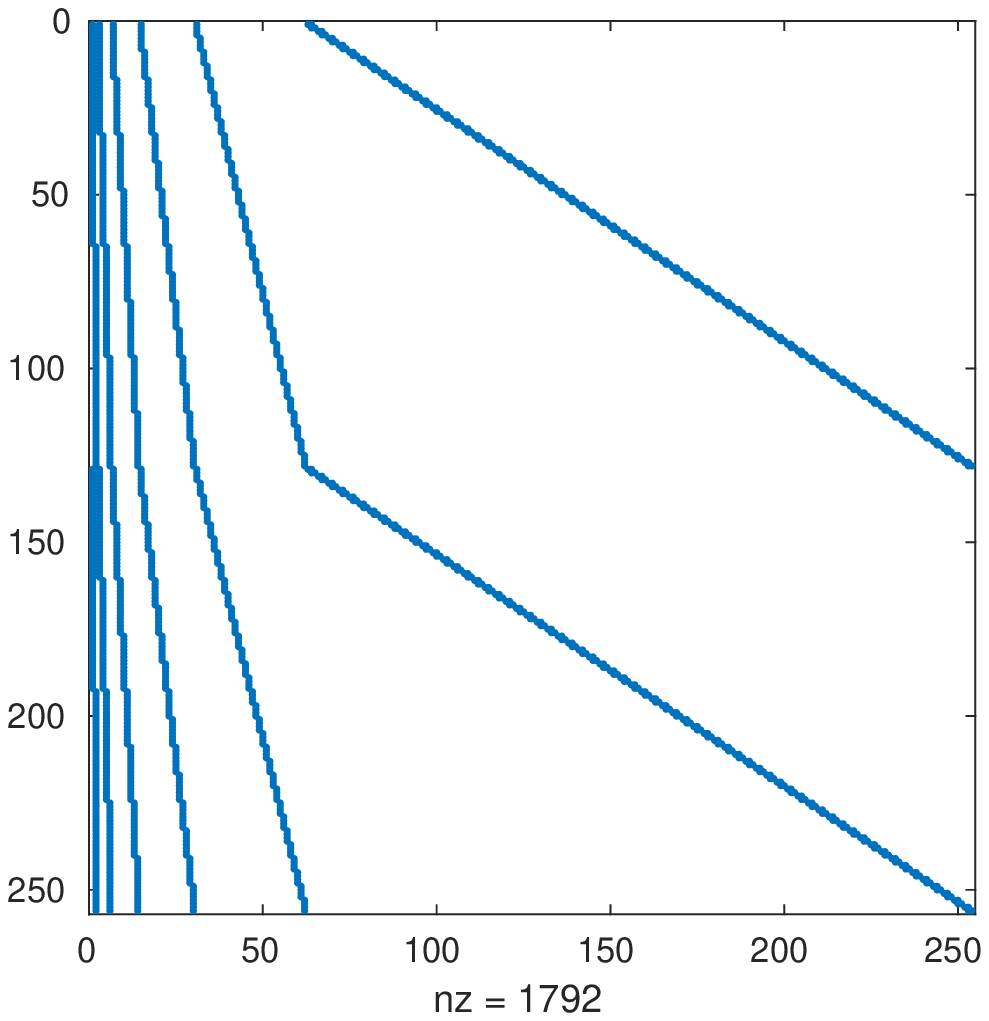}
  \put(40,71) {$N = 128$}
  \end{overpic} 
  \end{minipage} 
  \begin{minipage}{.49\textwidth} 
  \begin{overpic}[width=\textwidth]{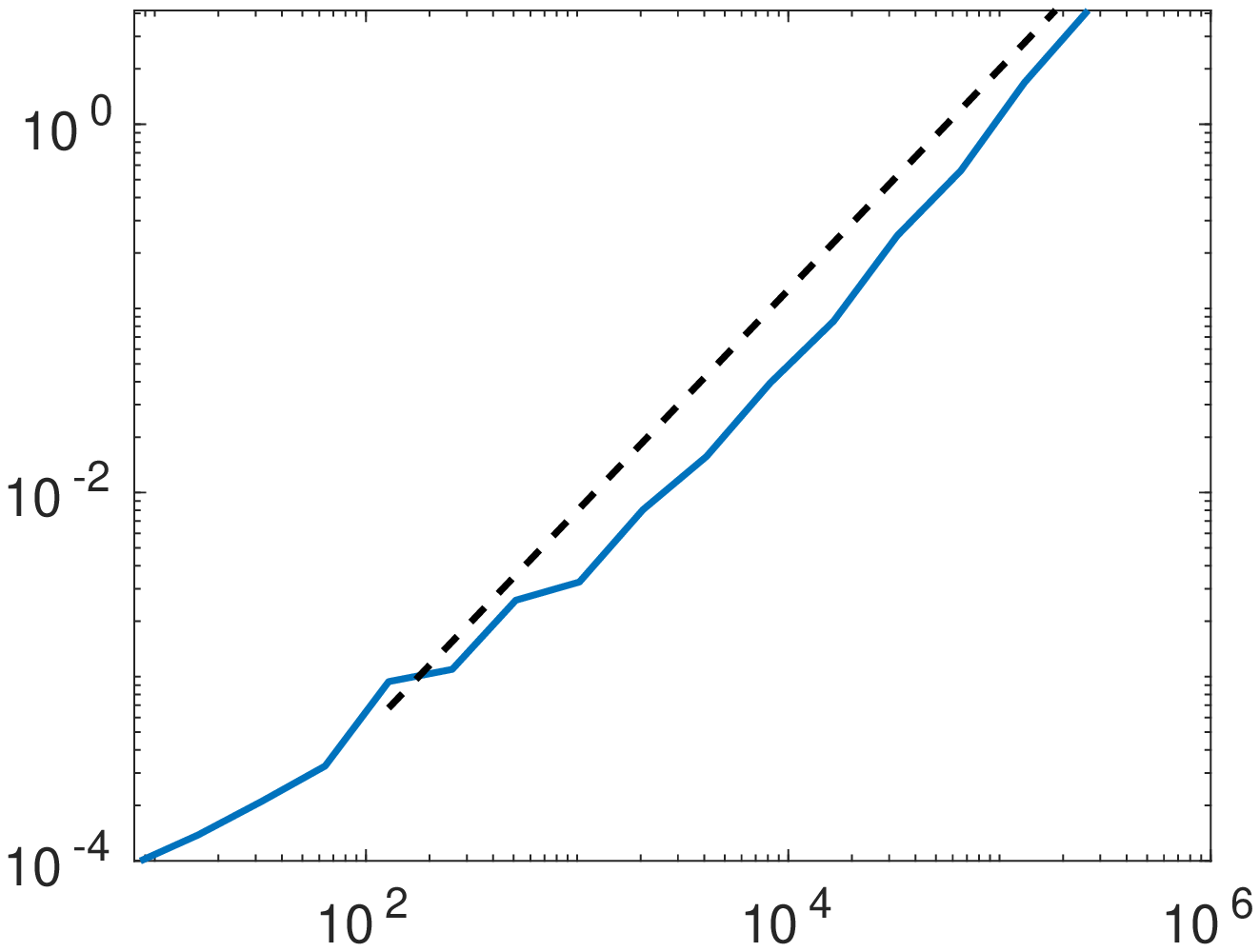} 
  \put(-1,20) {\rotatebox{90}{Solve time (s)}}
  \put(50,0) {$N$} 
  \put(50,47) {\rotatebox{48}{$\mathcal{O}(N^{1.2})$}}
  \end{overpic} 
  \end{minipage} 
  \caption{Left: The structure of the $2N\times (2N-2)$ sparse linear system that are satisfied by the parameters of a symmetric $H_{N, 1}$, excluding $u_1$ and $v_1$ for $N = 128$. Right: The computational time required to solve the rectangular linear system using `$\backslash$' in MATLAB for the remaining parameters of $H_{N,1}$, which executes the multifrontal multithreaded sparse QR factorization~\cite{davis2011algorithm}. From these experiments, we observe that the computational time scales like $\mathcal{O}(N^{1.2})$.}
  \label{fig:H1H2}
\end{figure}

\subsection{Recovering a symmetric, rank-$\mathbf{k}$ HSS matrix}\label{sec:SymmetricRandomRankKHSS} 

Our algorithm for recovering $H_{N, k}$ generalizes our approach for  $H_{N,1}$ (see~\cref{sec:SymmetricRandomRankOneHSS}). In total, it requires $5k + p$ matrix-vector products to recover $H_{N, k}$, including the $N/2 \times k$ matrices $U$ and $V$ in the largest off-diagonal blocks and the $k \times k$ matrices in each subblock. First, we recover the matrix $UV^\top$ by noticing that for any vectors $x,y \in \R^{N/2}$,
\[
H_{N,k}\begin{bmatrix}x \\ {\bf 0}_{N/2} \end{bmatrix} =\begin{bmatrix} * \\ UV^\top x\end{bmatrix}, \qquad H_{N,k}\begin{bmatrix}{\bf 0}_{N/2} \\ y \end{bmatrix} =\begin{bmatrix} VU^\top y\\ * \end{bmatrix}.
\]
Thus, we  query $UV^\top$ and $VU^\top$ by using matrix-vector products with $H_{N,k}$, allowing us to recover $UV^\top$ with the randomized SVD in $2k + p$ queries with high probability.

We construct a linear system for the remaining parameters that define $H_{N,k}$. Since we have already recovered $UV^\top$, a matrix-vector product of the form $H_{N,k}x$ yields the following $N$ linear equations:
\[
\begin{bmatrix} 
H_{11}x(i_1) + \refone{V}(i_1,:) H_{12} V(i_2,:)^\top x(i_2) \\ 
\refone{V}(i_2,:) H_{21}^\top V(i_1)^\top x(i_1) + H_{22}x(i_2) \\ 
H_{33}x(i_3) + U(i_1,:) H_{34} \refone{U}(i_2,:)^\top x(i_4) \\ 
U(i_2,:) H_{43}^\top \refone{U}(i_1)^\top x(i_3) + H_{44}x(i_4) \\ 
\end{bmatrix} = H_{N,k}x - \begin{bmatrix}UV^\top\!\! \begin{bmatrix}x(i_1)\\x(i_2) \end{bmatrix} \\[10pt] \refone{VU}^\top\!\! \begin{bmatrix}x(i_3)\\x(i_4) \end{bmatrix}  \end{bmatrix},
\]
where $i_1 = 1:N/4$, $i_2 = N/4+1:N/2$, $i_3 = N/2+1:3N/4$, and $i_4 = 3N/4+1:N$.  Since ${\rm vec}(AXB^\top) = (B\otimes A){\rm vec}(X)$, we write this as 
\[
[A_{1} \ A_{2} \ A_{3} \ A_{4} ] \!\!\! \begin{bmatrix} {\rm vec}(H_{12}) \\ {\rm vec}(H_{21}) \\ {\rm vec}(H_{34}) \\ {\rm vec}(H_{43}) \end{bmatrix} + 
\begin{bmatrix} 
\refone{{\rm vec} (H_{11}x(i_1))}\\ 
\refone{{\rm vec} (H_{22}x(i_2) )}\\ 
\refone{{\rm vec} (H_{33}x(i_3) )}\\ 
\refone{{\rm vec} (H_{44}x(i_4))} \\ 
\end{bmatrix} = \refone{ {\rm vec} (H_{N,k}x) - {\rm vec} \left( \begin{bmatrix}UV^\top\!\! \begin{bmatrix}x(i_1)\\x(i_2) \end{bmatrix} \\[10pt] \refone{VU}^\top\!\! \begin{bmatrix}x(i_3)\\x(i_4) \end{bmatrix}  \end{bmatrix}\right)},
\]
where we write $A_1 = (x(i_2)^\top V(i_2,:)) \otimes \refone{V}(i_1,:)$, $A_2 =  (x(i_1)^\top V(i_1,:) )\otimes \refone{V}(i_2,:)$, $A_3 = (x(i_4)^\top \refone{U}(i_2,:)) \otimes U(i_1,:)$, and $A_4 = (x(i_3)^\top \refone{U}(i_1,:)) \otimes U(i_2,:)$. Since $H_{jj}$ for $1\leq j\leq 4$ are themselves HSS matrices,  $N$ equations can be constructed recursively. 
For example, when $N = 16$ and $k =2$, $H_{N,k}$ takes the following form: 
$$
H_{16, 2} = \left[ 
\resizebox{.8\textwidth}{!}{
$
\begin{array}{c|c}
\begin{matrix}[c|c] \begin{array}{cccc}
   b_9  &b_{10}  & b_{11} & b_{12} \\
   b_{10} &b_{11} & b_{12} & b_{13} \\
   b_{11} & b_{12} & b_{13} & b_{14} \\
   b_{12} & b_{13} & b_{14} & b_{15}
\end{array}& V(1:4, :)\begin{bmatrix}
   b_1  &b_2  \\
   b_3  &b_4
\end{bmatrix} V(5:8, :)^\top \\ \hline V(5:8, :) \begin{bmatrix}
   b_1  &b_2  \\
   b_3  &b_4
\end{bmatrix}^\top V(1:4, :)^\top & \begin{matrix}[c c]\begin{array}{cccc }
   b_{16}  &b_{17}  & b_{18} & b_{19} \\
   b_{17} &b_{18} & b_{19} & b_{20} \\
   b_{18} & b_{19} & b_{20} & b_{21} \\
   b_{19} & b_{20} & b_{21} & b_{22}
\end{array}\\ \end{matrix}\\ \end{matrix}   & VU^\top \\\hline  UV^\top & \begin{matrix}[c|c] \begin{array}{cccc}
    b_{23}  &b_{24}  & b_{25} & b_{26} \\
   b_{24} &b_{25} & b_{26} & b_{27} \\
   b_{25} & b_{26} & b_{27} & b_{28} \\
   b_{26} & b_{27} & b_{28} & b_{29}
\end{array} & U(1:4, :) \begin{bmatrix}
   b_5  &b_6  \\
   b_7  &b_8
\end{bmatrix} U(5:8, :)^\top \\\hline U(5:8, :) \begin{bmatrix}
   b_5  &b_6 \\
   b_7  &b_8
\end{bmatrix}^\top U(1:4, :)^\top  & \begin{array}{cccc}
    b_{30}  &b_{31}  & b_{32} & b_{33} \\
   b_{31} &b_{32} & b_{33} & b_{34} \\
   b_{32} & b_{33} & b_{34} & b_{35} \\
   b_{33} & b_{34} & b_{35} & b_{36}
\end{array}\\ \end{matrix}
\end{array} $}
\right] ,
$$
where $U, V \in\mathbb{R}^{8\times 2}$. A matrix-vector product  $H_{16, 2}x$ for $x \in \mathbb R^{16}$ gives us $16$ equations in the unknowns  $b_1, \dots, b_{36}$. Given the repetitive structure of $H_{16, 2}$, we focus on the first four equations produced by this query to find the pattern of the linear system. The top left $4 \times 4$ subblock of the linear system matrix, called $L_1$, corresponds to the coefficients of $b_1, \dots, b_4$ in the first four equations of the linear system: 
$$\refone{L_1 = \begin{bmatrix} \sum_{j = 5}^8 x_j v_{1, j} v_{1, 1} & \sum_{ j = 5}^8 x_j v_{2, j} v_{1, 1} & \sum_{ j = 5}^8 x_j v_{1, j} v_{2, 1} & \sum_{j = 5}^8 x_j v_{2, j} v_{2, 1} \\
\sum_{j = 5}^8 x_j v_{1, j} v_{1, 2} & \sum_{j = 5}^8 x_j v_{2, j} v_{1, 2} & \sum_{j = 5}^8 x_j v_{1, j} v_{2, 2} & \sum_{j = 5}^8 x_j v_{2, j} v_{2, 2} \\
\sum_{ j = 5}^8 x_j v_{1, j} v_{1, 3} & \sum_{ j = 5}^8 x_j v_{2, j} v_{1, 3} & \sum_{ j = 5}^8 x_j v_{1, j} v_{2, 3} & 
\sum_{ j = 5}^8 x_j v_{2, j} v_{2, 3} \\
\sum_{ j = 5}^8 x_j v_{1, j} v_{1, 4} & 
\sum_{ j = 5}^8 x_j v_{2, j} v_{1, 4} & 
\sum_{ j = 5}^8 x_j v_{1, j} v_{2, 4}& 
\sum_{ j = 5}^8 x_j v_{2, j} v_{2, 4}
\end{bmatrix}.}$$

Note that this matrix is the same as the one generated by the Kronecker product given by \refone{$ V(1:4, :) \otimes (x(5:8)^\top V(5:8, :))$}. For the rest of the equations corresponding to multiplication by a $4 \times 4$ off-diagonal subblock, there is an analogous Kronecker product, as this is the same product up to changing the indexing of \refone{$U, V$, and $x$}.

To find the remaining coefficients in the first $4$ equations, we note that the only other nonzero coefficients of the parameters $b_i$ result from multiplication by the diagonal block with seven parameters: $b_9, \dots, b_{15}$. The corresponding subblock of the linear system matrix, which is $4 \times 7$ and starts from the parameter $b_9$, looks like this:

$$
L_2  = \begin{bmatrix}
x_1 & x_2 & x_3 & x_4 & 0 & 0 & 0 \\
0 & x_1 & x_2 & x_3 & x_4 & 0 & 0\\
0 & 0 & x_1 & x_2 & x_3 & x_4 & 0\\
0 & 0 & 0 & x_1 & x_2 & x_3 & x_4
\end{bmatrix}.
$$

Putting  these nonzero blocks together  and setting the rest of the coefficients  to $0$ gives the first $4$ rows of the linear system: $\begin{bmatrix} L_1 & L_2 & \mathbf{0_{4, 25}}\end{bmatrix}$. As with $L_1$, the structure of $L_2$ is the same for the rest of the linear system, up to changing the indexing of $x$. 

\subsubsection{The linear system for $H_{N, k}$}
We generate the rank-$k$ linear system according to the analogous formula using Kronecker products. Now that we have a general form for the linear system given by one matrix-vector product, we find the number of queries needed to generate a linear system with more equations than the parameters of $H_{N, k}$. We claim that this number is at least $3k$. This is  the same analysis as in~\cref{eq:symmHSSdof}; we have already recovered $U$ and $V$, and we also assume the diagonal blocks each have $2^{2\ell}$ degrees of freedom because we know by~\cref{lemma:symm} that symmetry does not reduce the number of queries. So, there are   $k^2(N/2^\ell - 2) + 2^\ell \cdot N$  parameters. Then by~\cref{eq:lowerbound}, the number of matrix-vector queries  to guarantee more equations than unknowns is $\lceil  \frac{k^2 }{2^\ell}  - \frac{ 2k^2}{N} + 2^\ell  \rceil \leq 3k$.

Thus, we perform $3k$ matrix-vector products and generate $3Nk$ equations. We  solve this linear system with the multifrontal multithreaded sparse QR factorization. From the degrees of freedom perspective, $3k$ is  a lower bound on the number of queries that will give a unique solution. We also observe that $3k$ queries are enough to uniquely determine $H_{N, k}$. Moreover, this solution has little error because the right-hand side of the least-squares problem is in the column space of the left-hand side. Thus, only the condition number of the linear system upper bounds the solution's relative error rather than its square. We observe that the relative error of the solution grows slowly  with $N$ in~\cref{sec:numericalHSS}, demonstrating the stability of the HSS recovery algorithm in practice.

\subsection{General rank-$k$ HSS recovery}\label{sec:generalHSS} If $H_{N, k}$ is not symmetric, the same algorithm works in principle. Recovery of $U, V, W$, and $Z$ requires $2(k + p)$ queries with $A$ and $2k$ queries with $A^\top$ via the randomized SVD. Then, we solve for the remaining parameters by constructing a linear system from matrix-vector products. By~\cref{eq:dof}, there are now $k^2(N/2^{\ell - 1} - 4) + 2^\ell  N$ parameters defining $H_{N, k}$,  so the number of queries to guarantee more equations than unknowns is $\lceil \frac{ k^2}{2^{\ell - 1}} - \frac{4k^2}{N} + 2^\ell \rceil \leq 4k$ using the bound $k \leq 2^\ell \leq 2k$ and the fact that $N \gg k^2$. Thus, the total number of matrix-vector products with $A$ is $6k + 2p $ and with $A^\top$ is $2k$. 
\subsection{Restricted symmetric, rank-$k$ HSS recovery}\label{sec:restrictedHSS} We  often deal with 
a more specific symmetric rank-$k$ HSS matrix, which we call a restricted HSS matrix $A_{N, k}^{\rm{res}}$. This also arises as one step of our HODLR recovery algorithm. In this case, we already know the $N/2 \times k$ matrices $U$ and $V$,  the  row and column spaces defining $A_{N, k}^{\rm{res}}$. In addition, $A_{N, k}^{\rm{res}}$'s diagonal blocks $A_{ ii}^{\rm{res}}$ have a more specific structure, rather than being general $2^\ell \times 2^\ell$ blocks.  The top half diagonal blocks are $A_{ii}^{\rm{res}} = V ^{(i)} A_{i}^{\rm{res}} V^{(i)\top}$, and the bottom half diagonal blocks are $A_{ jj}^{\rm{res}} = U ^{(j)} A_{ j}^{\rm{res}} U^{(j)\top}$ where $A_{ i}^{\rm{res}}$, $A_{ j}^{\rm{res}}$ are symmetric $k \times k$ matrices and $V^{(i)}$ and $U^{(j)}$ correspond to the $i$th and $j$th restrictions of $U$ and $V$, respectively. To recover a restricted HSS matrix, we  construct a  linear system similar to that in~\cref{sec:SymmetricRandomRankKHSS}; however, it requires only  $2k$, rather than $3k$, matrix-vector products to be solved. 

 To see this, we  count the degrees of freedom in $A_{N, k}^{\rm{res}}$ and divide by the number of linearly independent equations given by a matrix-vector product. The analysis is almost identical to that in~\cref{sec:SymmetricRandomRankKHSS}, and the only difference is that the diagonal blocks $A_{ ii}^{\rm{res}}$ contribute $k^2$ instead of $2^{2m}$ degrees of freedom. We have $k^2( N/2^\ell - 2)$ parameters in the off-diagonal blocks and $Nk^2/2^\ell$ parameters in the diagonal blocks. Thus, the number of queries  that guarantee more equations than unknowns is
$$
\left \lceil  \frac{ k^2}{2^\ell} - \frac{2k^2}{N} + \frac{ k^2}{2^\ell} \right \rceil \leq 2k,
$$
where the last inequality follows by using the bound $k \leq 2^\ell$. Thus, we use 
$2k$ matrix-vector products to recover a symmetric restricted HSS matrix.

\subsection{Asymptotic Complexity} \label{sec:HSScomplexity}
We find the asymptotic behavior of $T_\text{HSS}$, the time required to recover a rank-$k$ HSS matrix $A$ using the algorithm in~\cref{sec:generalHSS}. Let $T_A$ denote the time to apply $A$ or $A^\top$ to a vector, and $T_\text{flop}$ denote the time for a floating point operation. To recover $A$, one applies $A$ to  $6k + 2p$ vectors and $A^\top$ to $2k$ vectors, totaling a cost of $(8k + 2p)T_A$. The remaining cost is incurred by solving a $4Nk$ by $Nk^2/2^{\ell - 1} - 4k^2 + 2^\ell N$ linear system for the parameters of $A$. Using the QR factorization to solve the associated least-squares problem costs \refone{$\mathcal O(T_\text{flop} N^3 k^2) $. Then overall, $T_\text{HSS} = \mathcal O(T_A(k + p) + T_\text{flop}N^3k^2)$}.

\subsection{Numerical results}\label{sec:numericalHSS} 
\begin{figure}[h]
  \centering
  \begin{minipage}{.5\textwidth} 
  \begin{overpic}[width=\textwidth]{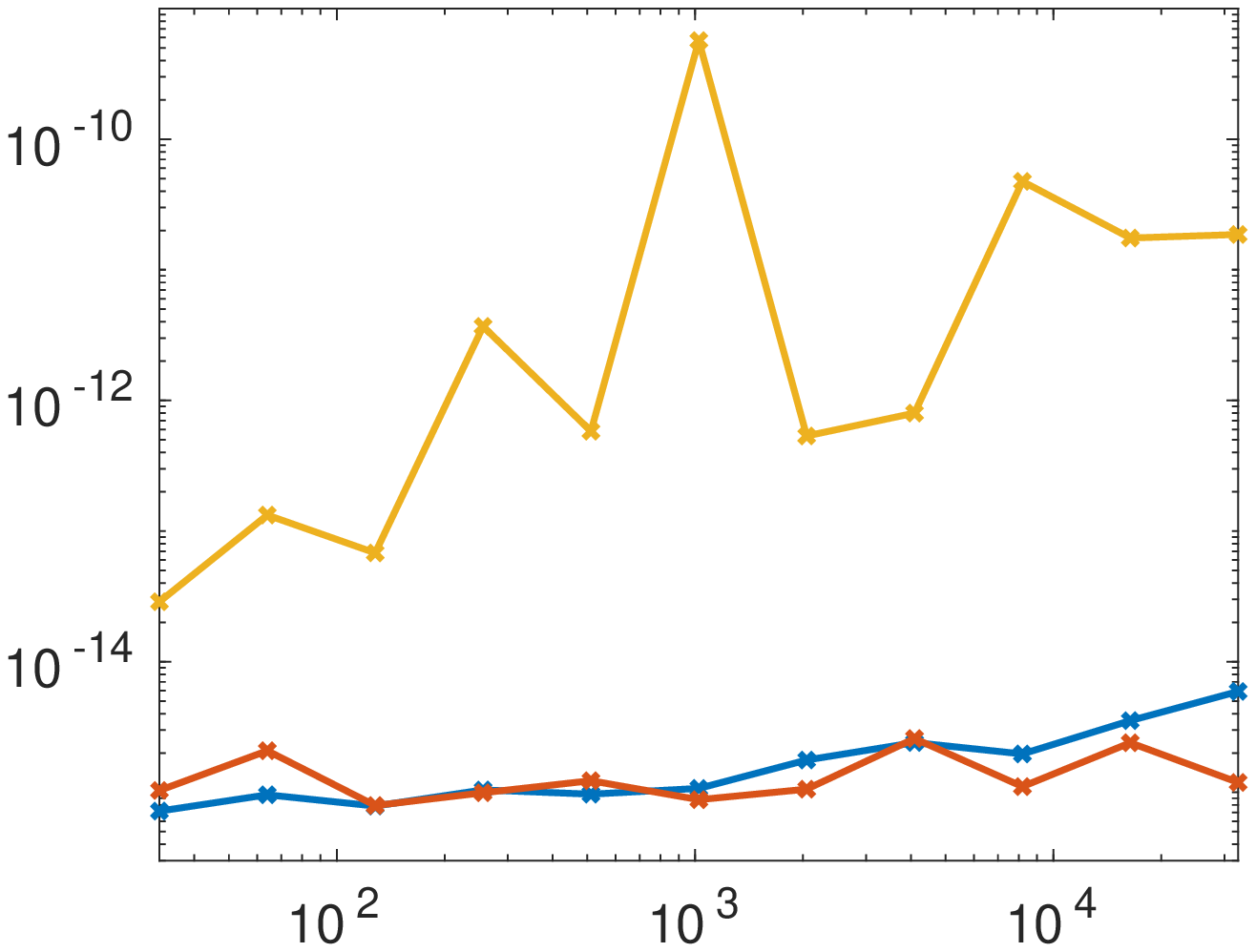} 
  \put(-6,20) {\rotatebox{90}{$\|A - A_{\text{computed}}\|_2/\|A\|_2$}}
  \put(50,-3.5) {$N$} 
  \put(34,35) {LM}
  \put(52,16){HT}
  \put(63, 9.5){M1}
  \end{overpic} 
  \end{minipage} 
  \begin{minipage}{.35\textwidth}
\begin{tabular}{c|ccc}
$N$ & M1& LM& HT\\
\hline
32 & 90   & 33   & 15  \\ 
64 &112 &33 &15 \\
128& 134 & 33& 15 \\
256 & 156 & 33& 15\\
512 & 178 & 33 & 15\\
1024 & 200 & 33 & 15 \\
2048 & 222 & 33 & 15 \\
4096 & 244 & 33 & 15 \\
\hline
\end{tabular} 
  \end{minipage}
\caption{Left: The relative error in recovering an $N\times N$ symmetric rank-1 generic HSS matrix. M1 refers to the recursive elimination strategy in~\cite{martinsson2016compressing},    a peeling algorithm of Martinsson. Our algorithm, denoted as HT, is described in~\cref{sec:SymmetricRandomRankOneHSS}. LM denotes the Levitt--Martinsson HBS algorithm in~\cite{martinsson2022HBS}. The errors in the spectral norm were calculated via 20 iterations of the power method. Right: Number of matrix-vector products used by each algorithm. While LM has a much larger observed recovery error than M1, it only uses $\mathcal O (k)$ matrix-vector products. Our algorithm also only uses $\mathcal O (k) $ queries and achieves the same accuracy as M1. In contrast, M1 needs $\mathcal O (\refone{(k + p)}\log_2(N))$ queries.} 
\label{fig:GenericHSS}
\end{figure}

In this section, we compare the performance of our HSS recovery algorithm to those of Martinsson in~\cite{martinsson2016compressing} and Levitt and Martinsson in~\cite{martinsson2022HBS} using  the same error measurement as in those papers. We measure  relative error using  $|| H_{N, k} - H_{N, k}^{\text{recov}}|| / ||H_{N, k} ||$ in the spectral norm via 20 iterations of the power method. \reftwo{All of the experiments  in this section and later on in~\cref{sec:stability} were written in MATLAB and carried out on a Xeon E5-2698 processor with a single core and 256 GB of memory.}

We measure the success of the three algorithms in several ways. First, in terms of number of matrix-vector queries,~\cite{martinsson2016compressing} requires $\mathcal O ( (k + p) \log_2(N))$ queries, whereas both~\cite{martinsson2022HBS} and our HSS recovery algorithm only require $\mathcal O(k)$. On the other hand, our algorithm requires the most floating point operations due to the  multifrontal multithreaded QR factorization. \refone{In contrast, the Levitt--Martinsson algorithm and Martinsson peeling algorithm require  $\mathcal O (Nk^2)$ and $\mathcal O (N (\log_2(N))^2 k^2)$  floating point operations, respectively.} However, due to the sparsity structure of the linear system, in practice, our algorithm's computational time complexity is $\mathcal O (N^{1.2})$. 

A recovery algorithm should also be  accurate. As shown in~\cref{fig:GenericHSS}, all three algorithms are reasonably accurate, and the performance of the algorithms in~\cite{martinsson2016compressing} and~\cite{martinsson2022HBS} are close to the numerical results in their papers. We observe that our algorithm and the peeling algorithm of~\cite{martinsson2016compressing} are comparable and more accurate than the HBS algorithm of~\cite{martinsson2022HBS}. Moreover, while our algorithm and the peeling algorithm of~\cite{martinsson2016compressing} achieve high accuracy, our method requires far fewer matrix-vector products. 
 
Finally, our algorithm and the  recent algorithm of~\cite{martinsson2022HBS} are both projection-based, as opposed to  the peeling algorithm of~\cite{martinsson2016compressing}, which employs a strategy of recursive elimination. Projection is advantageous compared to peeling because projection limits floating point error, whereas peeling can, in principle, be more numerically unstable.  However, despite its use of projection, the algorithm in~\cite{martinsson2022HBS} is observed to be the least accurate. One explanation for this is the algorithm's recursive strategy of recovering the telescoping factorization of an HSS matrix, starting from the  level closest to the diagonal. This process follows the sequential order of the telescoping factorization and can potentially propagate floating point errors. In fact, the recovered matrix may not have an exact HSS structure when the recovery algorithm is performed in floating point arithmetic. In contrast, the linear system strategy used in our HSS recovery algorithm solves for all of the parameters of an HSS matrix at once. It enforces the HSS structure even in floating point arithmetic. 

It is worth noting that the Levitt--Martinsson algorithm~\cite{martinsson2022HBS} recovers an HSS matrix only in the form of its telescoping factors, which is an equivalent characterization of an HSS matrix. In contrast, our algorithm  recovers precisely the parameters that define each off-diagonal block in an HSS matrix. In this sense, our two algorithms are complementary, as they are suited to two different characterizations of HSS matrices. One of the algorithms may be preferable depending on how one stores an HSS matrix.

\section{HODLR recovery}\label{sec:HODLRrecovery}
\begin{figure} [H]
\centering 
\begin{minipage}{\textwidth}  
\[
A\,\, = \,\,
  \renewcommand{\arraystretch}{.2}
  \setlength{\arraycolsep}{.1pt}
\begin{array}{|c|c|}
\hline 
\begin{matrix}[c|c] \begin{matrix}[c|c] \rule{0pt}{1.45\normalbaselineskip}  A_{11}  & W_3Z_3^\top \\[10pt]\hline\rule{0pt}{1.45\normalbaselineskip} \ U_3V_3^\top & A_{22}\\[10pt] \end{matrix} & W_{1}Z_{1}^\top \\[10pt]\hline\rule{0pt}{1.45\normalbaselineskip} U_{1}V_{1}^\top & \begin{matrix}[c|c] \rule{0pt}{1.45\normalbaselineskip}A_{33} & W_4Z_4^\top \\[10pt]\hline\rule{0pt}{1.45\normalbaselineskip} \ U_4V_4^\top & A_{44}\\[10pt] \end{matrix}\\[10pt] \end{matrix}   &  W_0Z_0^\top \\[10pt]\hline\rule{0pt}{1.45\normalbaselineskip} U_0V_0^\top & \begin{matrix}[c|c]\rule{0pt}{1.45\normalbaselineskip}\begin{matrix}[c|c] \rule{0pt}{1.5\normalbaselineskip}A_{55} & W_5Z_5^\top \\[10pt]\hline\rule{0pt}{1.45\normalbaselineskip} \ U_5V_5^\top & A_{66} \\[10pt] \end{matrix}& W_{2}Z_{2}^\top \\[10pt]\hline\rule{0pt}{1.45\normalbaselineskip} U_{2}V_{2}^\top & \begin{matrix}[c|c] \rule{0pt}{1.45\normalbaselineskip}A_{77} & W_6Z_6^\top \\[10pt]\hline\rule{0pt}{1.45\normalbaselineskip} \ U_6V_6^\top & A_{88}\\[10pt] \end{matrix}\\ \end{matrix}\\
\hline
\end{array}
\]
\end{minipage} 
\caption{A HODLR matrix after three levels of partitioning. The matrices $U_i$, $V_i$, $W_i$, and $Z_i$ have at most $k$ columns, and $A_{ii}$  will be partitioned further if its size is larger than $2k$. Given access to matrix-vector queries, how can one recover all the $U_i$, $V_i$, $W_i$, $Z_i$, and $A_{ii}$?}
\label{fig:StructureDiagram}
\end{figure}  

We now describe our algorithm for HODLR recovery. We denote an $N \times N$, symmetric, generic rank-$k$ HODLR matrix as $B_{N, k}$, \refone{where again  $N = 2^n$}. By generic, we mean that the $B_{N, k}$ is defined by random parameters.  We then discuss how to \refone{recover} a symmetric rank-$1$ HODLR 
matrix defined by parameters that are not truly random, i.e., the column and row spaces of different subblocks  have some correlation. This technique has multiple stages, and we pare our matrix down by \refone{recovering} parameters until all that is left is a restricted HSS matrix (see~\cref{sec:restrictedHSS}). We first recover the subblocks that do not correlate with one another by projecting inputs as in~\cref{sec:SymmetricRandomRankOneHODLR}, then \refone{recover} additional information by projecting outputs, and finally \refone{recover} the rest using the symmetric rank-$k$ HSS algorithm described in~\cref{sec:SymmetricRandomRankKHSS}. Finally, we extend our algorithm to nonsymmetric rank-$k$ HODLR matrices of general size (see~\cref{sec:GeneralHODLR}). 

Both our algorithm and existing peeling algorithms utilize the randomized SVD algorithm for recovering low-rank blocks. It is worth noting that one may be able to save a factor of 2 here by using the Nystr\"{o}m method. However, in this paper, we care more about the complexity $\mathcal O((k + p) \log_2(N)$ than constants. 

\subsection{Symmetric and generic rank-1 HODLR  recovery}\label{sec:SymmetricRandomRankOneHODLR} 
 We can visualize $B_{N, 1}$'s structure (see~\cref{fig:StructureDiagram}) where we force the matrix to be symmetric by setting $Z_i = U_i$ and $W_i = V_i$ for all $i$. Additionally, because $B_{N, 1}$ is rank-1 HODLR,  we label each of the matrices $U_i$ and $V_i$ as $u_i$ and $v_i$ to emphasize that they are  vectors. The off-diagonal blocks are rank-1 and are defined by random parameters. That is, each rank-1 subblock is defined by generating random vectors $u_i$ and $v_i$ of the proper size.

Our algorithm recovers $B_{N, 1}$ in levels, starting with the largest off-diagonal blocks, then recursing on smaller subblocks. At level 1, we \refone{recover} the two off-diagonal blocks of size $N/2$ by finding $u_0$ and $v_0$. We do so with the exact same technique as described in~\cref{sec:HSSrecovery}, using $2 + p$ matrix-vector products and  applying the randomized SVD.

Then,  we concatenate $u_0$ and $v_0$ in a ``blacklist'' vector  $b_1$ of length $N$, i.e.,
$$
b_1 = \begin{bmatrix}
| \\ v_0 \\ | \\ | \\ u_0 \\ |
\end{bmatrix}.
$$
Moving on to level 2, we now  recover $u_1, v_1, u_2$, and $v_2$. We perform matrix-vector products with  analogously constructed alternating input vectors: $[ x_1 , 0, x_2, 0 ]^\top$ and $[ 0, y_1, 0, y_2 ]^\top$. Importantly, we modify the  inputs   $x_1$ and $x_2$ by replacing each with its projection onto the orthogonal space of the corresponding block of $b_1$, which contains  $u_0$ and $v_0$. We can call these new, orthogonalized vectors $x_1'$ and  $x_2'$. Thus, we set:
\begin{align*}
x_1' \perp b_1 (1:N/4) & \iff x_1' \perp  v_0(1:N/4),\\
x_2' \perp b_1(N/2+1 : 3N/4) &\iff x_2' \perp u_0(1:N/4).
\end{align*}

This step is necessary because when we perform the product with $B_{N, 1}$,  $x_1$ and $x_2$ are orthogonal to the corresponding column spaces of the subblocks that were previously \refone{recovered}. That is,
$$
B_{N, 1}\begin{bmatrix}
x_1' \\ 0 \\ x_2' \\ 0
\end{bmatrix}  = \begin{bmatrix}
\ast \\ \refone{u_1v_1^\top x_1'} + (v_0u_0^\top)(1:N/4, N/4+1 : N/2)x_2' \\ \ast \\ (u_0v_0^\top)(N/4+1:N/2, 1:N/4)x_1' + u_2v_2^\top x_2'
\end{bmatrix} = \begin{bmatrix}
\ast \\\refone{u_1v_1^\top x_1'} \\ \ast \\ u_2 v_2^\top x_2'
\end{bmatrix}.
$$
Thus, projection ``zeroes'' out the matrix in the desired subblocks, so that we can isolate the actions of $u_1v_1^\top$ and $u_2v_2^\top$  on random vectors. We can do the same trick with $[0, y_1, 0, y_2]^\top$ to isolate the actions of $v_1u_1^\top$ and $v_2u_2^\top$ on inputs of our choice.  

Because $B_{N, 1}$ is a generic HODLR matrix, if $x_1$ and $x_2$ are  random, then the  projected random vectors  $x_1'$ and $x_2'$ are nonzero with probability 1. Then, one can still use the randomized SVD algorithm to recover the blocks at this level. This will take \refone{$2 + p$} matrix-vector products with $B_{N, 1}$.

At level $\ell$, we construct two types of input vectors: one  with alternating blocks of vectors $x_i$ and zeros, and another which alternates zeros and vectors $y_i$, where the blocks are of size $2^{n - \ell}$. Then, we project the blocks $x_i$ and $y_i$   to be orthogonal to the corresponding blocks of the blacklist vectors $b_1, \dots, b_{\ell-1}$. When $B_{N, 1}$ is applied to these alternating vectors, the projected inputs ``zero out'' $B_{N, 1}$'s \refone{recovered} subblocks, and isolate the actions of the level-$\ell$ subblocks. We  then use the randomized SVD algorithm to recover these blocks with high probability. 

\begin{algorithm}
\caption{Generic Symmetric Rank-1 HODLR Recovery}\label{alg:cap}
\textbf{Input} Function handle \texttt{matvec}: $x \mapsto A x$ and \refone{$N= 2^n$}, the size of $A$ \\
\textbf{Output} Vectors $U_1, V_1, \dots, U_w, V_w$ that store  $u$ and $v$ factors for every level's blocks \\
 \textbf{Set} $w =  \frac{W(2^{n + p } \log(2))}{\log(2)} - p$ and $r = 1 + p$
\begin{algorithmic}
\For{level $\ell= 1, \dots, w-1$} 
	\State \textbf{Set}  bsize $= N/2^\ell$, \quad $U_\ell = $[ ], \quad  $V_\ell = $[ ], \quad and \\
       \qquad  $X = \begin{bmatrix}
	\texttt{randn}(\text{bsize}, r) ; \texttt{zeros}(\text{bsize}, r) ;\cdots;\texttt{randn}(\text{bsize}, r) ; \texttt{zeros}(\text{bsize}, r)
	\end{bmatrix}$  \\ 
	\If{$\ell > 1$} 
	\State  \textit{Project blocks of $X$ onto the corresponding orthogonal space of the  blacklist. \refone{$I$ is the corresponding index set and $BL$ is a matrix storing stacked blacklist vectors.}}
	\For{each nonzero block of the form $X(I, :)$}
		\State \textbf{Set} $Q_1 = \texttt{orth} (\text{BL}(I, :))$, \quad $X(I, :) \gets X(I, :) - Q_1 Q_1^\top X(I, :)$
	\EndFor 
	\EndIf \\
	
	\State \textit{Query $A$ with projected $X$. Apply rSVD to outputs to obtain each $u$ at level $\ell$.} 
	\State \textbf{Set} $W = \texttt{matvec}(X)$, \quad$Y = $ [ ] 
	
	 \For{each nonzero block of the form $X(I, :)$} 
	 	\State \textbf{Set} $J = I + \text{bsize}$, \quad $u = \texttt{orth} (W(J, :), 1)$ , \quad $U_\ell \gets [U; \texttt{zeros}(\text{bsize},1); u]$ 
		 \State \textbf{Set} $Q_2 = \texttt{orth}( \text{BL} (J, :))$, \quad $y \gets u - Q_2Q_2^\top u$, \quad $Y \gets [Y; \texttt{zeros} (\text{bsize}, 1); y]$
		 
	 \EndFor \\
	 \State \textit{Query $A$ with  projected $Y$. Apply rSVD to outputs to obtain each $v$ at level $\ell$. }
	 \State \textbf{Set} $V_\ell = \texttt{matvec}(Y)$ 
	 \For{each nonzero block $V_\ell(I)$}
	 	\State \textbf{Set} $V_\ell(I) \gets V_\ell(I) / ( Y(I + \text{bsize})^\top  U(I + \text{bsize}))$
	 \EndFor \\
	 \State \textbf{Append}  $[U_\ell (1:\text{bsize}); V_\ell(1:\text{bsize}); \dots; U_\ell( 2^\ell\text{bsize} + 1: N) ; V_\ell( 2^\ell\text{bsize} + 1: N)]$ as a column to the matrix BL. 
\EndFor \\
\State Let $\tilde A$ be the matrix recovered so far using $U_1, V_1, \dots U_{w-1}, V_{w-1}$.
\State  \textbf{Set} $\text{bsize} = 2^{n - w + 1}$, \refone{$X = \texttt{randn}(N, \text{bsize} + p)$,} $W = \texttt{matvec}(X) - \tilde AX$ 
\For{each block of the form $W(I)$ of size bsize in $W$}
	\State $U_w(I) = \texttt{orth}(W(I))$
\EndFor 
\State $V_w = \texttt{matvec}(U_w) - \tilde A U_w$, 
\end{algorithmic}
\end{algorithm}

One question arises: At what level does this algorithm stop working? Once the length of the inputs $x_i$ and $y_i$ is less than or equal to the number of vectors it needs to be orthogonal to, i.e. the size of the blacklist, the projection step forces them to be zero vectors. Even slightly before this level, when the inputs $x_i$ and $y_i$ belong to a subspace of dimension less than $1 + p$, the randomized SVD algorithm cannot oversample the inputs, and we observe some loss of accuracy in the randomized SVD. In the generic HODLR case, the input subspace is of dimension $2^{n - \ell} - \# (\text{blacklist})$. A new vector is added to the blacklist at each level, so that at level $\ell$, there are $\ell-1$ vectors in the blacklist. Thus, when $\ell$ becomes large enough so that $2^{n - \ell } - \ell + 1 < 1 + p$, we must use a different strategy to \refone{recover} the remaining blocks. We can calculate the value of $\ell$ at which this occurs:
$$
w := \frac{W(2^{n + p } \log(2))}{\log(2)} - p,
$$
where $W$ is the Lambert W-function. Because the levels range from $1, \dots, \log_2(N)$, we have the upper bound of
\begin{equation}w \leq \log_2(N). \label{eq:upperboundw}
\end{equation}
Then, using the lower bound on the Lambert W-function $W(x) \geq \log(x) - \log(\log(x))$ for $x \geq e$, we can also bound  $w$ from below as
\begin{align*}
w &\geq \frac{\log(2^{n + p} \log(2)) - \log(\log(2^{n+ p } \log 2))}{\log(2)} - p  \\
&\geq n  - 1 - \log_2(\log(2^{n + p} \log(2))) + \frac{ \log(\log(2))}{\log(2)}\\
&\geq \log_2(N) - \log_2(\log_2(N) + p) - 1   \\
&= n - \log_2(n + p) - 1 \numberthis \label{eq:lowerboundw}
\end{align*}
 
 It  remains to recover the diagonal blocks of size $2^{n - w + 1} \times 2^{n - w + 1}$, of which there are $N/2^{n- w + 1} = 2^{w - 1}$.
We call the matrix we have recovered so far $B_{N, 1}'$, which has zeros in these diagonal subblocks  that we have not yet recovered.  Now, it suffices to recover the matrix $B_{N, 1} - B_{N, 1}'$, whose only nonzero subblocks are the unknown diagonal subblocks. One also effectively has access to a matrix-vector product with $B_{N, 1} - B_{N, 1}'$ and a given vector $x$ by taking $B_{N, 1}x - B_{N, 1}'x = (B_{N, 1} - B_{N, 1}')x$.

We  recover the diagonal blocks $B_{{N, 1}_{ii}}$ with \refone{$2^{n-w+1} + p$} matrix-vector products. We construct an \refone{$N \times (2^{n - w + 1}+ p)$} input matrix $X$ with random Gaussian entries. Then the product $(B_{N, 1} - B_{N, 1}') X$  isolates the actions of each diagonal block $B_{{N, 1}_{i, i}}$ on a \refone{$2^{n - w + 1} \times (2^{n - w + 1} + p)$} random Gaussian input matrix $X_i$, for $1 \leq i \leq 2^{w-1}$. Then we can apply the randomized SVD to recover diagonal blocks. \refone{Here, we perform enough matrix-vector products  to recover each diagonal block column-by-column; however, we prefer to use random Gaussian inputs, so instead we treat each diagonal block as a rank-$2^{n - w + 1}$ matrix and apply the randomized SVD.}

$$
\left[ 
\begin{array}{c|c}
\begin{matrix}[c|c] B_{{N, 1}_{11}} & 0 \\ \hline 0 & \begin{matrix}[c c]B_{{N, 1}_{22}}\\ \end{matrix}\\ \end{matrix}   & 0 \\\hline  0 & \begin{matrix}[c|c] \ddots & 0 \\\hline 0  & B_{{N, 1}_{2^{w-1}, 2^{w-1}}}\\ \end{matrix}
\end{array} 
\right]  \begin{bmatrix}
X_1 \\
X_2\\
 \vdots\\
X_{2^{w-1}}
\end{bmatrix} =
\begin{bmatrix}
B_{{N, 1}_{11}}X_1 \\
B_{{N, 1}_{22}} X_2 \\
\vdots \\
B_{{N, 1}_{2^{w-1}, 2^{w-1}}} X_{2^{w-1}}
\end{bmatrix}.
$$

Overall, we recover $B_{N, 1}$ with $2 + p$ matrix-vector products at each level until level $w$, then  \refone{$2^{n - w+1}+ p$} products for the diagonal blocks. The bounds ~\cref{eq:lowerboundw} and~\cref{eq:upperboundw}  yield an upper bound on the total number of required matrix-vector queries:
$$
\text{ \# matrix-vector products} = (2 + p)(w-1) + 2^{n - w + 1} < (6 + p)\log_2 (N) + 5p.
$$

\subsection{General symmetric rank-1 HODLR recovery}\label{sec:SymmetricRankOneHODLR} 
We now turn to the case where the parameters defining a rank-1 HODLR matrix are not random. We  denote this matrix by $C_{N, 1}$. We  recover $u_0$ and $v_0$ with the same $2 + p$ matrix-vector products as in~\cref{sec:SymmetricRandomRankOneHODLR}. Again, we store $u_0$ and $v_0$ in blacklist vector. At subsequent levels, we orthogonalize the inputs to the corresponding parts of $u_0$ and $v_0$ and recover subblocks by the same technique as in~\cref{sec:SymmetricRandomRankOneHODLR}, projecting inputs onto the orthogonal space of the blacklist.

In~\cref{sec:SymmetricRandomRankOneHODLR}, we recovered the vectors  $u$ and $v$ defining each rank-1 subblock by projecting  inputs, modifying the input to $x'$. Because $u$ and $v$ were random and $x'$  was orthogonal to blacklist  vectors with no correlation to $v$, the output $uv^\top x'$ was nonzero with probability 1. Thus, we recovered $u$ and $v$ using the randomized SVD.

However, in the general HODLR case, some blocks may ``fail'', i.e., their outputs equal 0. This prevents us from invoking the randomized SVD. If we want to observe the action of $u$  on a random input $x'$, and $x'$ is orthogonal to $k$  vectors of the blacklist which we call $b_{u1}, \dots, b_{uk}$, it is possible that  $v$ is a linear combination of $b_{u1}, \dots, b_{uk}$.  Then,  $uv^\top x' = 0$. So, projecting inputs may not recover all of $C_{N, 1}$. However, even if a block fails, we can still recover finer blocks that do not fail, as the projected inputs  zero out prior failed subblocks. Thus, we \refone{recover} what we can of $C_{N, 1}$ with this technique, which  again takes $(2 +  p)(w-1)$ queries.

We now project outputs instead of inputs, using a second pass of $2+ p$ matrix-vector products at each level with the failed blocks. More precisely, for a failed subblock $uv^\top$ at level $\ell$, we perform matrix-vector products with the same alternating inputs  as in~\cref{sec:SymmetricRandomRankOneHODLR}.  Let $uv^\top$ act on the   input $x_u$. This time, we do not project  $x_u$ onto the orthogonal space of the blacklist vectors, $b_{u, 1}, \dots, b_{u, k}$. Instead, the product  yields:
\begin{equation}
    uv^\top x_u + \sum_{i = 1}^{k }b_{u,i}(\ast) b_{v, i} (\ast)^\top x_i,
 \label{eq:projectoutput}\end{equation} where $\ast$ represents proper indexing  of the $k$ blacklist vectors. We define $\text{proj}_b$ as projecting subsets of a vector onto the corresponding subsets of vectors in blacklist $b$. Projecting the output onto  $b^\perp$, we kill the second term in~\cref{eq:projectoutput} and  obtain  $uv^\top x_u- \textrm{ proj }_b uv^\top x_u = (uv^\top - \textrm{ proj }_b uv^\top)x_u$, a rank-1 matrix applied to $x_u$.   Thus, we can  recover the matrix  $uv^\top - \textrm {proj} _b uv^\top$ with high probability using the randomized SVD,  revealing  the failed blocks up to a linear combination of the blacklist vectors. We repeat this for the failed blocks at each level,  taking at most $(2  + p)(w-1)$ matrix-vector products. This step at the final level $w$ also gives us the data of the symmetric diagonal blocks that are not a linear combination of the blacklist vectors.

As in~\cref{sec:SymmetricRandomRankOneHODLR}, to recover the rest of $C_{N, 1}$, we  \refone{recover} $C_{N, 1} - C_{N, 1}'$, where $C_{N, 1}'$ is the matrix we have recovered so far. The subblocks in $C_{N, 1} - C_{N, 1}'$ are  projections of $C_{N, 1}$'s blocks onto the blacklist at that subblock's level.  If there are $k$ vectors in the blacklist,  $C_{N, 1} - C_{N, 1}'$ is a restricted rank-$k$ HSS matrix.  By~\cref{sec:SymmetricRandomRankKHSS}, we  recover $C_{N, 1} - C_{N, 1}'$ in at most $2k$ matrix-vector products. The size of the blacklist is at most $w-1 \leq \log_2(N) - 1$, where equality holds if a vector is appended to the blacklist at every level until the diagonal blocks. In total, we have a loose upper bound on the number of matrix-vector products used to recover $C_{N, 1}$:
$$
\text{\# matrix-vector products} \leq 2(2 + p)(w-1) + 2(\log_2(N) - 1) \leq (6 + 2p) \log_2(N).
$$
To illustrate this algorithm and the possible ways blocks can pass or fail, we recover $C_{8, 1}$ as an example below.
\begin{example} The  structure of $C_{8, 1}$ is below. We first \refone{recover} $u_0$ and $v_0$ and store them in the blacklist $b$. At level 2, suppose the  blocks with an ``$\boldsymbol{\times}$'' failed and the block with a ``$\boldsymbol{\checkmark}$'' passed when we applied to the projected inputs.
$$
C_{8, 1} = 
\left[ 
\begin{array}{c|c}
\begin{matrix}[c|c] C_{11} &  v_1u_1^\top  \boldsymbol{\checkmark} \\ \hline u_1v_1^\top \boldsymbol{\times}
 & \begin{matrix}[c c]C_{22}\\ \end{matrix}\\ \end{matrix}   & v_0u_0^\top \\\hline  u_0v_0^\top & \begin{matrix}[c|c] C_{33} & v_2 u_2^\top \boldsymbol{\times}
 \\\hline u_2v_2^\top \boldsymbol{\times}
 & C_{44}\\ \end{matrix}
\end{array} 
\right] .
$$
The failed blocks tell us that for some of the projected inputs, which we will write as $x_1'$, $x_2'$, and $y_2'$,  we have that $u_1v_1^\top x_1' = v_2u_2^\top x_2' = u_2v_2^\top y_2' = 0$, so we cannot recover $u_1v_1^\top$ or $u_2v_2^\top$ via the randomized SVD. Then, we perform  matrix-vector products without projecting  inputs:
$$
\resizebox{1\textwidth}{!}{
$
C_{8, 1} \begin{bmatrix}
x_1 \\ 0 \\ x_2\\ 0
\end{bmatrix}= \begin{bmatrix}
C_{11} x_1 + v_0(1:4) u_0(1:4)^\top x_2 \\ u_1v_1^\top x_1 + \refone{v_0(5:8)} u_0(1:4)^\top x_2 \\
u_0(1:4) v_0(1:4)^\top x_1 + C_{33} x_2 \\\refone{u_0(5:8)} v_0(1:4)^\top x_1 + u_2v_2^\top x_2
\end{bmatrix}, \quad C_{8, 1}\begin{bmatrix}
0 \\ y_1 \\ 0 \\ y_2
\end{bmatrix} = \begin{bmatrix}
v_1 u_1^\top y_1 + v_0 (1:4) \refone{u_0(5:8)^\top y_2 }\\ C_{22} y_1 + \refone{v_0(5:8) u_0(5:8)}^\top y_2 \\
u_0(1:4) v_0 \refone{(5:8)}^\top y_1 + v_2 u_2^\top y_2\\ \refone{ u_0(5:8)} \refone{v_0(5:8)}^\top y_1 + C_{44} y_2
\end{bmatrix}.
$}
$$
We orthogonalize outputs to the corresponding parts of the blacklist, $b$. For clarity, we write out what $\text{proj}_b$ means for each block:
$$ \begin{bmatrix} (C_{11} - \text{proj}_{u_0(1:4)} (C_{11}))x_1 \\
(u_1 v_1^\top - \text{proj}_{u_0(1:4)} (u_1v_1^\top))x_1 \\
(C_{33} - \text{proj} _{v_0(1:4)} (C_{33})x_2 \\
(u_2 v_2^\top - \text{proj}_{v_0(1:4)} (u_2v_2^\top))x_2
\end{bmatrix}, \quad
\begin{bmatrix}
(v_1 u_1^\top - \text{proj}_{u_0(\refone{5:8})} (v_1u_1^\top))y_1 \\
(C_{22} - \text{proj} _{u_0(\refone{5:8})} (C_{22}) y_1 \\
 (v_2 u_2^\top - \text{proj}_{v_0(\refone{5:8})} (v_2u_2^\top))y_2 \\
(C_{44} - \text{proj} _{v_0(\refone{5:8})} (C_{44})y_2
\end{bmatrix}.
$$ If these projected outputs are nonzero, we use the randomized SVD to recover matrices of the form $u_iv_i - \text{proj}_{b} (u_i v_i)$. If they are zero, the column and row spaces of these blocks are linear combinations of the corresponding vectors in $b$. In either case, the blocks of the matrix $C_{8, 1} - C_{8, 1}'$ are  given by linear combinations of the vectors in $b$:
$$
C_{8, 1} - C_{8, 1}' = 
\left[ 
\begin{array}{c|c}
\begin{matrix}[c|c] \text{proj}_b C_{11} &   \text{proj}_b (v_1u_1^\top) \\ \hline \text{proj}_b (u_11_1^\top) 
 & \begin{matrix}[c c] \text{proj}_bC_{22}\\ \end{matrix}\\ \end{matrix}   & 0 \\\hline  0 & \begin{matrix}[c|c] \text{proj}_bC_{33} & \text{proj}_b (v_2u_2^\top) 
 \\\hline \text{proj}_b (u_2v_2^\top) 
 &\text{proj}_b C_{44}\\ \end{matrix}
\end{array} 
\right] .
$$
Because the size of the blacklist $b$ is 1, this is a rank-1 restricted HSS matrix. Thus, we learn it with $2$ matrix-vector products using the linear system in~\cref{sec:restrictedHSS}.
\end{example}

\subsection{General HODLR recovery}\label{sec:GeneralHODLR} 
One can generalize the algorithm in~\cref{sec:SymmetricRankOneHODLR} to more general HODLR recovery.

\paragraph{Rank~$k$ Symmetric HODLR}

The  symmetric rank-$k$ HODLR recovery algorithm generalizes that of~\cref{sec:SymmetricRankOneHODLR}. Let $E_{N, k}$ be a symmetric, $N \times N$, rank-$k$ HOLDR matrix. We  use the notation of ~\cref{fig:StructureDiagram}, where $U_i,  V_i$ are  $N \times k$ matrices defining each block of $E_{N, k}$, and symmetry forces $Z_i = U_i$ and $W_i = V_i$ for all $i$.  We recover $U_0$ and $V_0$ with high probability by the same technique as in~\cref{sec:SymmetricRandomRankKHSS}.  An  $N \times k$ blacklist matrix $B_1$  concatenates $U_0$ and $V_0$:
$$
B_1 = \begin{bmatrix}
V_0 \\
U_0
\end{bmatrix}.
$$
As before, we recurse on the structure of $E_{N, k}$ to \refone{recover}  blocks a level at a time. At level $\ell$, we construct an $N \times (k + p)$ input matrix and an $N \times k$ input matrix given by

$$
\begin{bmatrix}
X_1\\\mathbf{  0 _{2^{n - \ell}, k+p}} \\ \vdots \\ X_\ell \\  \mathbf{  0 _{2^{n - \ell}, k+p}}  \end{bmatrix} \quad \text{and} \quad \begin{bmatrix}
\mathbf{  0 _{2^{n - \ell}, k}} \\ Y_1 \\ \vdots \\  \mathbf{  0 _{2^{n - \ell}, k}}  \\ Y_\ell \end{bmatrix},
$$
where each of the  blocks $X_i$ is  $2^{n - \ell} \times (k+p)$ and $Y_i$ is  $2^{n - \ell} \times k$. We replace $X_i$ and $Y_i$ with their projections  onto the orthogonal space of the corresponding parts of the $m$ blacklist matrices, $B_1, \dots B_m$. That is, we orthogonalize each column vector of  $X_i$ and $Y_i$ to $k$ corresponding blacklist vectors. Then, products with these inputs zero out the blocks we have \refone{recovered},  isolating the actions of the level-$\ell$ blocks. If these outputs are nonzero, the randomized SVD  yields each level-$\ell$ block, using $2k + p$ matrix-vector products. We repeat this process until the block size is equal to the size of the blacklist, at which point projecting inputs sets them to  0. 

If a subblock applied to a projected input outputs 0, it is considered a ``failed'' block as before. We mark it  as failed and continue to recover what we can at the rest of the levels by projecting inputs. Then, we perform another pair of matrix-matrix products at each level $\ell$ with failed subblocks, as well as with the diagonal blocks. We project outputs, rather than inputs, onto the orthogonal space of the blacklist. We thus deduce these subblocks up to linear combinations of blacklist vectors.

If $E_{N, k}'$ is the matrix containing everything we have recovered thus far, we note that the subblocks of $E_{N, k} - E_{N, k}'$ are either zero blocks if we have completely \refone{recovered} them, or nonzero linear combinations of the blacklist at the time of \refone{recovering} them, and thus of rank at most the size of the blacklist at that level. For example, a failed block at level 2 must be a linear combination of the $k$ vectors in $B_1$, so it is at most rank $k$ (some of the scalars in the linear combination may be 0).

We can bound on the rank of each of these subblocks by the size of the final blacklist. Thus, we  view $E_{N, k} - E_{N, k}'$ as a restricted  HSS matrix of rank equal to the size of the blacklist. If the blacklist is of size $m$, we perform $2m$ matrix-vector products to \refone{recover} what remains of $E_{N, k}$ as described in~\cref{sec:restrictedHSS}. 

We  determine an upper bound on the number of matrix-vector queries to \refone{recover} $E_{N, k}$. This reduces to a bound on the level at which we stop partitioning $E_{N, k}$. This happens when the dimension of the input space is less than $k + p$, i.e., at level $L$, where $2^{n - L} - (L - 1) < k + p$. Because $L$ is a level, we trivially have the bound:
\begin{equation}\label{eq:upperboundL}
    L \leq \log_2(N).
\end{equation} To derive a lower bound on $L$, we want an upper bound on $m$. The blacklist is largest if we append $k$ vectors to it at every level before $L$:
$$
2^{n-L} - (L-1)k < k + p \implies L > \frac{ W\left ( \frac{ 2^{n + \frac{ p }{k}} \log(2)}{k} \right)}{\log(2) } - \frac{p}{k},
$$
where $W$ is the Lambert-$W$ function. 

In the first step, where we project inputs, we do $2k + p$ matrix-vector queries at each level from 1 to $L$. Then, when we project outputs, we do at most the same number of queries. In the final step, using the algorithm in~\cref{sec:restrictedHSS}, we recover a restricted HSS matrix of rank at most the size of the blacklist, which is at most $k\log_2(N)$ by~\cref{eq:upperboundL}. This will take $2k \log_2(N)$ queries. Thus, we can bound the  number of matrix-vector products to \refone{recover} $E_{N, k}$:
$$
\text{ \#  matrix-vector products} \leq (6k + 2p) \log_2(N).
$$

\paragraph{Nonsymmetric HODLR} If $A$ is not symmetric, when projecting inputs, we perform $2(k + p)$ matrix-vector products with $A$ and $2k$ matrix-vector products with $A^\ast$ at each level. Then, we do at most the same number of queries for both $A$ and $A^\ast$ at each level when we project outputs.  We are  left with a rank-$k \log_2(N)$ HSS matrix. Note that the  same algorithm in ~\cref{sec:restrictedHSS}  applies for generating the linear system to recover this HSS matrix, as it does not exploit  the symmetry of the diagonal blocks. Thus, we use $2k \log_2(N)$ additional queries to generate the linear system.  In sum, we require at most $(10k + 4p) \log_2(N) = \mathcal O((k + p)\log_2(N))$  matrix-vector products.

\paragraph{HODLR matrices of any size}  If the underlying HODLR matrix $A$ is of size $N\times N$ where $N$ is not a power of $2$, then one can essentially pad $A$ with zeros. Let $\tilde{N} = 2^{\lceil \log_2(N)\rceil}$, $N_1 = \lfloor \tilde{N} - N\rfloor$, and $N_2 = \lceil \tilde{N} - N\rceil$. Then, the matrix is given by  
\[
B = \begin{bmatrix} \pmb{0}_{N_1,N_1} & \pmb{0}_{N_1,N} & \pmb{0}_{N_1,N_2} \\ \pmb{0}_{N,N_1} & A & \pmb{0}_{N,N_2} \\ \pmb{0}_{N_2,N_1} & \pmb{0}_{N_2,N} & \pmb{0}_{N_2,N_2}   \end{bmatrix} \in \mathbb{R}^{\tilde{N}\times \tilde{N}} 
\]
is a rank-$k$ HODLR matrix. Here, $\pmb{0}_{m,n}$ is the zero matrix of size $m\times n$. Instead of recovering $A$ directly, we recover $B$ and then remove the zero padding. Since we have only have access to $x\mapsto Ax$ and $x\mapsto A^\top x$, we perform matrix-vector products with $B$ and $B^\top $ as follows: 
\[
Bx = \begin{bmatrix} \pmb{0}_{N_1,1} \\ Ay \\\pmb{0}_{N_2,1} \end{bmatrix}, \quad B^\top x = \begin{bmatrix} \pmb{0}_{N_1,1} \\ A^\top y \\\pmb{0}_{N_2,1} \end{bmatrix},\quad y = \begin{bmatrix}x_{N_1+1}\\\vdots\\x_{N_1+N}\end{bmatrix}.
\]
Recovering $A$ is thus reduced to recovering an $\tilde N \times \tilde N$ HODLR matrix. This will require at most $(9k + 2p) \lceil \log_2(N)\rceil = \mathcal O((k + p) \lceil \log_2(N) \rceil)$ matrix-vector products.

\subsection{Asymptotic Complexity} We determine the asymptotic complexity of the general HODLR recovery algorithm. It suffices to consider the generic HODLR algorithm, as the general HODLR algorithm described in~\cref{sec:GeneralHODLR} treats a general HODLR matrix as the sum of a generic HODLR matrix and an HSS matrix, and the complexity of the HSS recovery algorithm is already described in~\cref{sec:HSScomplexity}.

To compute $T_\text{HODLR}$, the time to recover a generic HODLR matrix, we first compute $T_\ell$, the  time it takes to recover level $\ell$. As in~\cref{sec:HSScomplexity},  we write this in terms of $T_A$ and  $T_\text{flop}$. At level $\ell$, one multiplies $A$ by $2(k + p)$ inputs and $A^\top$ by $2k$ inputs, totaling a cost of $T_A(4k + 2p)$. These inputs are projected so that  their nonzero blocks are orthogonal to the corresponding blocks of the blacklist vectors.  At level $\ell$, blocks are size $N/2^{\ell}$, and there are $\mathcal O (k \ell)$ blacklist vectors blacklist. Then, the cost of forming the projection matrix  is  $\mathcal O (T_\text{flop}  N k^2\ell^2)$. The cost of projecting $2(k + p)$ inputs is  $\mathcal O(T_\text{flop} k (k + p) \ell N  )$. Finally, the cost of the QR factorization done as part of the randomized SVD   is $\mathcal O (T_\text{flop} N(k + p)^2)$. Adding all of this together yields:
\begin{equation}\label{eq:Tl}
    T_\ell = \mathcal O (  T_A (k + p) + T_\text{flop} (Nk^2 \ell^2 + N k (k + p)\ell    + N(k + p)^2)).
\end{equation}
Summing~\cref{eq:Tl} over all levels $\ell = 1, \dots, \log_2(N)$ yields:
\begin{equation}
    T_\text{HODLR} =  \mathcal O( T_A(k + p) \log_2(N) + T_\text{flop} Nk^2 (\log_2(N))^3)    .                                                                                               
\end{equation}
\refone{For comparison, Martinsson's peeling algorithm has asymptotic complexity $\mathcal O (T_A ((k + p)\log_2N) + T_\text{flop}(N (\log_2 N)^2 k^2)$. }
\subsection{Numerical Results}\label{sec:stability} We plot the relative error of the HODLR recovery algorithms for increasingly large matrices. In particular,~\cref{fig:hodlrfig} shows the results when our recovery algorithm is applied to a generic HODLR matrix. We also implement Martinsson's algorithm in~\cite{martinsson2016compressing} for comparison. To recover a matrix $A$, our algorithm generates $A_{\text{computed}}$. As before, we measure relative error using 20 iterations of the power method. 

\begin{figure}[h]
  \centering
  \begin{minipage}{.6\textwidth} 
  \begin{overpic}[width=\textwidth]{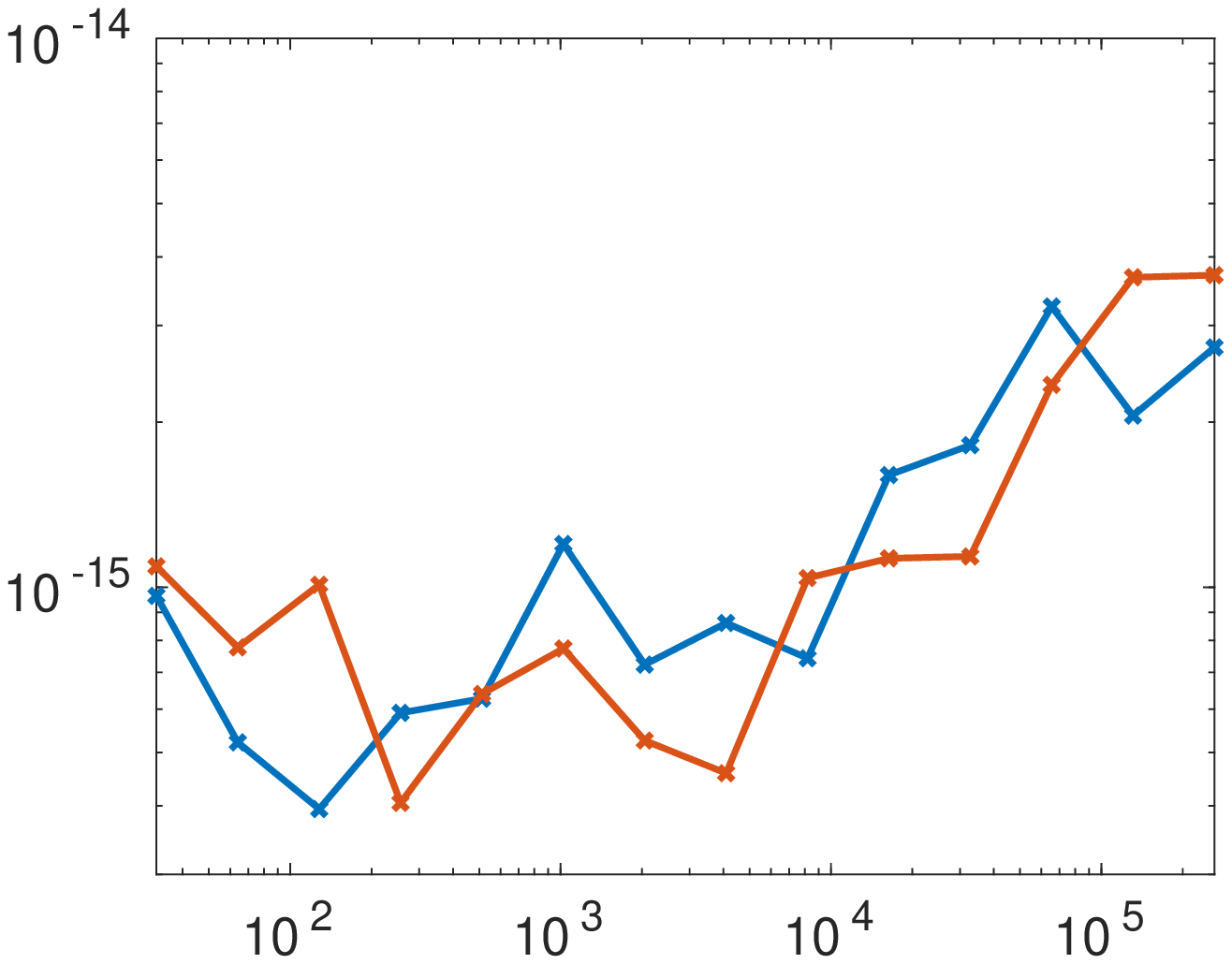} 
  \put(-5,18) {\rotatebox{90}{$\|A - A_{\text{computed}}\|_2/\|A\|_2$}}
  \put(55,-2) {$N$} 
  \put(47, 28){HT}
  \put(47, 12){M1}
  \end{overpic} 
  \end{minipage} 
  \caption{The relative error in recovering an $N\times N$ symmetric rank-1 random HODLR matrix. M1 refers to the recursive elimination strategy in~\cite{martinsson2016compressing} and our algorithm (HT) is described in~\cref{sec:SymmetricRandomRankOneHODLR}. Excellent recovery accuracies are observed, and the potential numerical instability of recursive elimination is not seen.}
  \label{fig:hodlrfig}
\end{figure}

It suffices to consider the numerical results for HSS and generic HODLR matrices because the algorithm described in~\cref{sec:SymmetricRankOneHODLR} treats a general HODLR matrix as the sum of a generic HODLR matrix and an HSS matrix. The accuracy of our HSS recovery algorithm was discussed and illustrated in~\cref{sec:numericalHSS}  and ~\cref{fig:GenericHSS}. In~\cref{fig:hodlrfig}, we observe that our generic HODLR recovery algorithm performs just as well as Martinsson's peeling algorithm in~\cite{martinsson2016compressing}, and both are very accurate. In addition, our algorithm's projection strategy makes it theoretically stable.

\section{Applications to numerically rank-$k$ matrices and related problems} \label{sec:related}

\begin{table} 
\caption{HODLR recovery with error-prone matrix-vector products where the matrix-vector products are perturbed by Gaussian random vectors with expected norms of $10^{-4}$. In this situation, both recursive elimination (M1) and recursive projection (HT) provide accurate HODLR recovery, where the error is measured as $\|A - A_{\text{computed}}\|_2$. (Similar accuracies are achieved for larger values of $N$.)}
\label{table:errorpronehodlr}
\begin{tabular}{ccccccc}
$N$ & $32$ & $64$ & $128$ & 256 & 512 & 1024 \\
\hline
M1 & $5.3\! \times \! 10^{-4}$   &  $5.6\! \times\!  10^{-4}$   &   $5.3\! \times\!  10^{-4}$   &   $4.4\! \times \! 10^{-4}$   &   $4.5\! \times\!  10^{-4}$   &  $5.0\! \times\!  10^{-4}$  \\
HT & $5.3\! \times\!  10^{-4}$ & $5.3\! \times\!  10^{-4}$ & $4.4\! \times\!  10^{-4}$ & $3.2\! \times \! 10^{-4}$ & $3.8\! \times \! 10^{-4}$   & $2.5\! \times \! 10^{-4}$ \\ 
\hline
\end{tabular} 

\end{table} 

\begin{table} 
\caption{HSS recovery with error-prone matrix-vector products where the matrix-vector products are perturbed by Gaussian random vectors with expected norms of $10^{-4}$. In this situation, both recursive elimination (M1) and recursive projection (HT) provide accurate HSS recovery, where the error is measured as $\|A - A_{\text{computed}}\|_2$. (Similar accuracies are achieved for larger values of $N$.)}
\label{table:errorpronehss}
\begin{tabular}{ccccccc}
$N$ & $32$ & $64$ & $128$ & 256 & 512 & 1024 \\
\hline
M1 & $5.3\! \times \! 10^{-4}$   &  $5.6\! \times\!  10^{-4}$   &   $5.3\! \times\!  10^{-4}$   &   $4.4\! \times \! 10^{-4}$   &   $4.5\! \times\!  10^{-4}$   &  $5.0\! \times\!  10^{-4}$  \\ 
LM & $5.3\! \times \! 10^{-4}$   &  $5.6\! \times\!  10^{-4}$   &   $5.3\! \times\!  10^{-4}$   &   $4.4\! \times \! 10^{-4}$   &   $4.5\! \times\!  10^{-4}$   &  $5.0\! \times\!  10^{-4}$  \\ 
HT & $5.3\! \times\!  10^{-4}$ & $5.3\! \times\!  10^{-4}$ & $4.4\! \times\!  10^{-4}$ & $3.2\! \times \! 10^{-4}$ & $3.8\! \times \! 10^{-4}$   & $2.5\! \times \! 10^{-4}$ \\ 
\hline
\end{tabular} 

\end{table} 

The HODLR and HSS matrix recovery algorithms outlined in~\cref{sec:GeneralHODLR} and~\cref{sec:generalHSS} are robust and can be applied in related contexts. In these experiments, error in the spectral norm is calculated via 20 iterations of the power method.
\begin{table} 
\caption{Recovery of HODLR matrix with off-diagonal blocks of numerical rank 10. Here, M denotes the Martinsson's peeling algorithm, and H denotes our algorithm. Error is measured as $\|A - A_{\text{computed}}\|_2/ \| A\|_2$.}
\label{table:numericalhodlr}
\begin{tabular}{cp{1.62cm}p{1.62cm}p{1.62cm}p{1.62cm}p{1.62cm}p{1.62cm}}
N & 2048 & 4096 & 8192 & 16384 & 32768 & 65536 \\
\hline
M & $1.6\! \times \!  10^{-13}$ & $1.5\! \times \!  10^{-13}$ &$1.5\! \times \!  10^{-13}$ &$1.5\! \times \!  10^{-13}$ &$1.4\! \times \!  10^{-13}$ & $1.4\! \times \!  10^{-13}$ \\
H & $1.9\! \times \!  10^{-13}$ & $2.0\! \times \!  10^{-13}$ &$2.0\! \times \!  10^{-13}$ &$2.0\! \times \!  10^{-13}$ &$1.9\! \times \!  10^{-13}$ & $1.8\! \times \!  10^{-13}$ \\
\hline
\end{tabular}
\end{table} 

\begin{table}
\caption{Recovery of HSS matrices with off-diagonal blocks of numerical rank 10. We observe that all three algorithms perform well, and our projection-based algorithm (H) has about one digit of precision more than Martinsson's peeling algorithm (M1), and two digits more than Martinsson's recent projection-based algorithm (M2). Error is measured as $\|A - A_{\text{computed}}\|_2/ \| A\|_2$.}
\label{table:numericalhss}
\begin{tabular}{cp{1.59cm}p{1.59cm}p{1.59cm}p{1.59cm}p{1.59cm}p{1.59cm}}
N & 2048 & 4096 & 8192 & 16384 & 32768 & 65536 \\
\hline
M1 & $1.6\! \times \!  10^{-13}$ & $1.5\! \times \!  10^{-13}$ &$1.5\! \times \!  10^{-13}$ &$1.4\! \times \!  10^{-13}$ &$1.5\! \times \!  10^{-13}$ & $1.6\! \times \!  10^{-13}$ \\
M2 & $4.8\! \times \!  10^{-12}$ & $1.8\! \times \!  10^{-11}$ &$3.3\! \times \!  10^{-11}$ &$8.7\! \times \!  10^{-11}$ &$3.2\! \times \!  10^{-10}$ & $9.2\! \times \!  10^{-10}$ \\
H & $2.5\! \times \!  10^{-14}$ & $4.6\! \times \!  10^{-14}$ &$6.0\! \times \!  10^{-14}$ &$7.2\! \times \!  10^{-14}$ &$8.6\! \times \!  10^{-14}$ & $1.2\! \times \!  10^{-13}$ \\
\hline
\end{tabular}
\end{table}

\begin{description}[leftmargin=*]
\item[Error-prone matrix-vector products.] Suppose we are more limited in the accuracy of our matrix-vector products. More precisely, consider a perturbation factor $\varepsilon$. Instead of applying a HODLR matrix $A$ to a vector $x_i$, our algorithm works with input-output pairs $(x_i, Ax_i + \varepsilon w_i)$, where $w_i$ is a length-$N$ vector whose entries are drawn from a standard random Gaussian distribution. Then we observe that both our HSS and HODLR algorithms outperform the existing algorithms. The results of this error-prone matrix-vector products recovery problem in \cref{table:errorpronehodlr} and \cref{table:errorpronehss} were made with the setting $\varepsilon = 10^{-5}$. 
\item[Numerically rank-$k$ HSS and HODLR.]
Suppose that the low-rank blocks of an HSS or HODLR matrix are not exactly rank-$k$, but rather numerically rank-$k$. We can apply our recovery algorithms to such matrices, as well as the existing algorithms in~\cite{martinsson2016compressing} and~\cite{martinsson2022HBS}. The results from these experiments are shown in \cref{table:numericalhodlr} and \cref{table:numericalhss}. In this setting, the oversampling parameter $p$ plays an even more important role, as the probability of success of randomized SVD producing an accurate approximation to a numerically rank-$k$ matrix depends only on $p$. However, just as in the peeling algorithm of~\cite{martinsson2016compressing}, it suffices to take $p = 5$ or $10$.

\end{description}

\section{Conclusion} In this paper, we investigated several different matrix recovery problems when one only has access to a matrix $A$ via matrix-vector products $x \mapsto Ax$ and $x \mapsto A^\top x$. In most cases, we were concerned with the exact recovery problem. However, there are many related questions one might ask in the contexts of noisy recovery and partial observations. One may also wish to recover a best approximation rather than the exact matrix itself. Additionally, in this paper, we observed that for some recovery problems, like low-rank recovery,  access to the transpose $x \mapsto A^\top x$ is fundamental, whereas for others, such as tridiagonal  and Toeplitz recovery, it does not reduce the number of queries needed for recovery. This leads us to pose the question in general; when is the transpose needed for a recovery problem, and when is it not? Finally, one may extend these questions in matrix recovery to the continuous setting, which corresponds to the problem of learning an operator given input-output pairs. We think of our work as a starting point for these related recovery questions.

\bibliographystyle{siam}
\bibliography{references}

\end{document}